\newcommand{\norm}[1]{\left\Vert #1 \right\Vert}
\newcommand{\ls}{\lesssim}
\newcommand{\C}{\mathbb{C}}
\newcommand{\R}{\mathbb{R}}
\newcommand{\angles}[1]{\langle #1 \rangle}
\newtheorem*{PWtheorem}{Paley-Wiener Theorem}
\DeclareMathOperator{\im}{Im}
\DeclareMathOperator{\re}{Re}
\newtheorem{theorem}{Theorem}
\newtheorem{lemma}{Lemma}
\theoremstyle{definition}
\newtheorem{definition}{Definition}
\theoremstyle{remark}
\numberwithin{equation}{section}
\begin{document}

\title[Cubic NLS: spatial analyticity]{ On the radius of spatial analyticity for cubic  nonlinear Schr\"{o}dinger equation }

\author{Achenef Tesfahun}
\email{Achenef.Temesgen@uib.no}
 
\address{Department of Mathematics\\
University of Bergen\\
PO Box 7803\\
5020 Bergen\\ Norway}


\subjclass{35Q40; 35L70; 81V10}

\keywords{Cubic NLS;  Radius of analyticity of solution; Lower bound for the radius;  Gevrey spaces}

\date{July 26, 2016}

\begin{abstract}
  It is shown that the uniform radius of spatial analyticity $\sigma(t)$ of solutions at time $t$ to the 1d, 2d and 3d cubic nonlinear Schr\"{o}dinger equations cannot decay faster than 
$1/|t|$ as $|t| \to \infty$, given initial data that is analytic with fixed radius $\sigma_0$.
\end{abstract}

\maketitle

\section{Introduction}
We consider the Cauchy problem for the defocusing cubic nonlinear Schr\"{o}dinger equation (NLS)
\begin{equation}\label{nls}
\begin{cases}
iu_t + \Delta u = |u|^2 u  ,
\\
u(x,0) = u_0(x),
\end{cases}
\end{equation}
where $u: \R^{1+d}\rightarrow \C$.
A solution to \eqref{nls} satisfies 
 \begin{align}
 \label{consv-mass}
 M[u[t]]:&=\norm{u(t)}^2_{L^2(\R^d)}=M[u[0]] 
 \end{align}
 and
 \begin{align}
 \label{consv-energy}
 E[u(t)]:&=\| \nabla u (t) \|^2_{L^2 (\R^d)}+ \frac12\|  u(t)\|_{L^4(\R^d)}^4=E[u(0)]  
 \end{align}
 which are the conservation of mass and energy, respectively. The well-posedness of \eqref{nls} in Sobolev spaces $H^s(\R^d)$ has been studied 
intensively; see for instance \cite{CKST02, CW90, FG07, Ts87}.  In particular, global well-posedness is known
for $s\ge (d-1)/2$ for $d=1, 2,3$.

 In the present paper we shall study spatial analyticity of the solutions to \eqref{nls} motivated by earlier works on this issue 
 for the derivative NLS in 1d by Bona, Gruji\'c and Kalisch \cite{bgk06}. 
 In particular, we consider a real-analytic initial data $u_0$ with 
 uniform radius of analyticity $\sigma_0>0$, so there is a holomorphic extension to a complex strip 
 $$ S_{\sigma_0} =\{x + iy : |y| < \sigma_0 \}.$$ The question is then whether this property persists for all later times $t$, but with a possibly smaller and shrinking radius of analyticity $\sigma(t) > 0$, i.e. is the solution $u(t,x)$ of \eqref{nls} analytic in $S_{\sigma(t)}$ for all $t$? 
   For short times it is shown that the radius of analyticity remains at least as large as the initial radius, i.e. one can take $\sigma(t)=\sigma_0$. 
For large times on the other hand we use the idea introduced in \cite{st15} (see also \cite{sd16}) to show that $\sigma(t)$ can decay no faster than 
$1/|t|$ as $|t| \to \infty$.
 For studies on related issues for nonlinear partial differential equations see for instance \cite{bgk05, cdn14,  hhp11,  hp12,  j86, km86, p12}.

A class of analytic function spaces suitable to study analyticity of solution is the analytic Gevrey class (see e.g. \cite{ft89}). These spaces are denoted $G^{\sigma,s} = G^{\sigma,s}(\mathbb R^d)$ with a norm given by
\[
\| f \|_{G^{\sigma, s}} = \left\| e^{\sigma | D |}\angles{ D }^s f\right\|_{L^2},
\]
where $D = -i\nabla$ with Fourier symbol $\xi$ 
and $\angles{\cdot}=\sqrt{1+|\cdot|^2}$.
 space, denoted $G^{\sigma,s} = G^{\sigma,s}(\mathbb R^d)$, is a . This space

 For $\sigma=0$ the Gevrey-space coincides with the Sobolev space $H^s$.  
One of the key properties of the Gevrey space is that every function in $G^{\sigma,s}$ with $\sigma>0$ has an analytic extension to the strip $S_\sigma$. This property 
is contained in the following.
\begin{PWtheorem}
Let $\sigma > 0$, $s \in \mathbb R$.  Then the following are equivalent:
\begin{enumerate}
\item $f \in G^{\sigma, s}$.
\item $f$ is the restriction to the real line of a function $F$ which is holomorphic in the strip
\[
S_\sigma =  \{ x + i y :\ x,y \in \mathbb{R}^d, \  | y | < \sigma\}
\]
and satisfies
\[
\sup_{| y | < \sigma} \| F( x + i y ) \|_{H^s_x} < \infty.
\]
\end{enumerate}
\end{PWtheorem}
The proof given for $s = 0$ in \cite[p. 209]{k76} applies also for $s\in \R$ with some obvious
modifications.

Observe that the Gevrey spaces satisfy the following embedding property:
\begin{align}
\label{Gembedding}
  G^{\sigma,s} &\subset G^{\sigma',s'} \quad \text{for all $0 \le \sigma' < \sigma$ and $s,s' \in \mathbb R$}.
\end{align}
In particular, setting $\sigma'=0$, we have the embedding $ G^{\sigma,s} \subset H^{s'}$ for all $0  < \sigma$ and $s, s'\in \mathbb R $.
As a consequence of this property and the existing well-posedness theory in $H^s$ we conclude that the Cauchy problem \eqref{nls} has a unique, 
smooth solution for all time, given initial data 
$u_0 \in G^{\sigma_0,s}$ for all $\sigma_0>0$ and $s\in \R$.

Our main result gives an algebraic lower bound on the radius of analyticity $\sigma(t)$ of the solution as the time $t$ tends to infinity.
\begin{theorem}\label{thm-gwp}
Assume $u_0 \in G^{\sigma_0,s}(\R^d)$ for some $\sigma_0 > 0$, $s\in \R$ and  $d=1, 2, 3$. Let $u$ be the global $C^\infty$ solution of \eqref{nls}. Then $u$ satisfies
\[
  u(t) \in G^{\sigma(t), s}(\R^d) \quad \text{for all } \ t\in \R,
\]
with the radius of analyticity $\sigma(t)$ satisfying an asymptotic lower bound
\[
\sigma(t) \ge \frac c{|t|} \quad \text {as} \ |t|\rightarrow \infty,
\]
where $c > 0$ is a constant depending on  $\|u_0\|_{G^{\sigma_0,s} (\R^d)}$, $\sigma_0$ and $s$.
\end{theorem}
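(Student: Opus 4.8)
The plan is to combine a local well-posedness theory in the Gevrey spaces $G^{\sigma,s}$, whose lifespan depends only on the size of the data and not on $\sigma$, with an approximate conservation law controlling the growth of a suitable analytic norm on each such time step. First I would reduce to a convenient Sobolev index. By the embedding \eqref{Gembedding} together with the Paley--Wiener theorem, the radius of analyticity of $u(t)$ is insensitive to the value of $s$, so from $u_0 \in G^{\sigma_0,s}$ one obtains $u_0 \in G^{\tilde\sigma_0,1}$ for some $0 < \tilde\sigma_0 \le \sigma_0$ (equal when $s \ge 1$, and otherwise exploiting $e^{\varepsilon\abs{\xi}} \gs \angles{\xi}^{1-s}$). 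It therefore suffices to prove $u(t) \in G^{\sigma(t),1}$ with $\sigma(t) \gs 1/\abs t$, and to read the conclusion back off for the original $s$ via Paley--Wiener. Throughout I would use the substitution $v = e^{\sigma\abs D}u$, for which $\norm{v(t)}_{H^1} = \norm{u(t)}_{G^{\sigma,1}}$ and which solves $iv_t + \Delta v = e^{\sigma\abs D}(\abs u^2 u)$; here $\sigma > 0$ is held fixed during the evolution and will be chosen at the very end in terms of the target time.

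The local step rests on the subadditivity $e^{\sigma\abs\xi} \le e^{\sigma\abs{\xi_1}}e^{\sigma\abs{\xi_2}}e^{\sigma\abs{\xi_3}}$, valid whenever $\xi = \xi_1 - \xi_2 + \xi_3$, which is exactly the frequency relation carried by the cubic term $\abs u^2 u$. This lets one dominate $e^{\sigma\abs D}(\abs u^2 u)$ by the corresponding trilinear expression in $v$, so that the standard $H^1$ well-posedness estimates for cubic NLS (Strichartz or $X^{s,b}$, available since the equation is energy-subcritical in $d = 1,2,3$) transfer verbatim to $G^{\sigma,1}$. The outcome is a solution on an interval $[0,\delta]$ with $\delta = \delta(\norm{u_0}_{G^{\sigma,1}})$ independent of $\sigma$, which is what makes the later iteration uniform.

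The heart of the matter is an approximate conservation law. I would track the analytic mass $\norm{v(t)}_{L^2}^2$ and the analytic energy $E[v(t)] = \norm{\nabla v(t)}_{L^2}^2 + \half\norm{v(t)}_{L^4}^4$, which jointly control $\norm{v}_{H^1}^2$ by Gagliardo--Nirenberg in the defocusing, energy-subcritical regime $d \le 3$. Differentiating in time and using the equation for $v$, both time derivatives reduce --- via the cancellations that give exact conservation at $\sigma = 0$ --- to multilinear forms paired against the defect $\mathcal D = e^{\sigma\abs D}(\abs u^2 u) - \abs v^2 v$, which measures the failure of $e^{\sigma\abs D}$ to be multiplicative. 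On the Fourier side this defect carries the weight $e^{\sigma(\abs\xi - \abs{\xi_1} - \abs{\xi_2} - \abs{\xi_3})} - 1$, which by the mean value theorem and the triangle inequality $\abs\xi \le \abs{\xi_1} + \abs{\xi_2} + \abs{\xi_3}$ is bounded by $\sigma\bigl(\abs{\xi_1} + \abs{\xi_2} + \abs{\xi_3} - \abs\xi\bigr)$; this is the crucial gain of one factor of $\sigma$. Estimating the resulting forms by the relevant multilinear bounds for $d \le 3$ yields, on each step,
\begin{equation*}
\sup_{t\in[0,\delta]}\norm{v(t)}_{H^1}^2 \le \norm{v(0)}_{H^1}^2 + C\sigma\,\delta\, P\bigl(\sup_{t\in[0,\delta]}\norm{v(t)}_{H^1}\bigr)
\end{equation*}
for a fixed polynomial $P$. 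I expect this step --- producing the $\sigma$-gain together with multilinear estimates that close in $H^1$ uniformly in $d = 1,2,3$ --- to be the main obstacle.

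Finally I would iterate. Fix a large target time $T$ and put $B_0 = \norm{u_0}_{G^{\tilde\sigma_0,1}} \ge \norm{u_0}_{G^{\sigma,1}}$ for $\sigma \le \tilde\sigma_0$. Partitioning $[0,T]$ into $N = T/\delta$ steps, each of which raises the controlling quantity by at most $C\sigma\delta P(2B_0)$ as long as $\norm{v}_{H^1} \le 2B_0$, the total increase over $[0,T]$ is at most $N C\sigma\delta P(2B_0) = C\sigma T\,P(2B_0)$, in which the step length $\delta$ cancels. Choosing $\sigma = c/T$ with $c = c(B_0)$ small enough makes this increase less than $3B_0^2$, so a continuity/bootstrap argument keeps $\norm{v(t)}_{H^1} \le 2B_0$ on all of $[0,T]$. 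Hence $u(T) \in G^{c/T,1}$, giving $\sigma(T) \gs 1/T$; the bound as $t \to -\infty$ follows from the time-reversal symmetry $u(t,x) \mapsto \overline{u(-t,x)}$ of \eqref{nls}.
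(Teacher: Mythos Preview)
Your proposal is correct and follows essentially the same route as the paper: reduce to $s=1$ via the embedding \eqref{Gembedding}, establish local well-posedness in $G^{\sigma,1}$ with $\sigma$-independent lifespan, prove an approximate conservation law for the analytic mass and energy by extracting a factor of $\sigma$ from the defect $e^{\sigma|D|}(|u|^2u) - |v|^2v$, and iterate over $[0,T]$ choosing $\sigma \sim 1/T$. One minor remark: the paper's approximate conservation bound (Theorem~\ref{thm-approx}) does not carry the explicit $\delta$ factor you wrote---the per-step error is $C\sigma\,\|u\|_{X^{\sigma,1,b}_\delta}^4(1+\|u\|_{X^{\sigma,1,b}_\delta}^2)$ controlled via the local bound \eqref{solnbound}---but since $\delta$ is itself a fixed function of the data norm this makes no difference in the final iteration, and your cancellation $N\cdot\delta = T$ is effectively replaced by $N \le 2T/\delta$ with $\delta$ absorbed into the constant $c$.
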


By time reversal symmetry of \eqref{nls} we may from now on restrict ourselves to positive times $t\ge 0$. 
The first step in the proof of Theorem \ref{thm-gwp} is to show that in a short time interval $0 \le  t \le  \delta$, where $\delta > 0$ depends on the norm of the initial data, 
the radius of analyticity remains strictly positive. This is proved by a contraction argument involving energy estimates, 
Sobolev embedding and a multilinear estimate which will be given in the next section.
The next step is to improve the control on the growth of the solution in the time interval $[0, \delta]$, measured in the data norm $G^{\sigma_0, 1}$.
To achieve this we show that, although the conservation of  $G^{\sigma_0, 1}$-norm of solution does not hold exactly, it does hold in an approximate sense (see Section 3).
This approximate conservation law will allow us to iterate the local result and obtain Theorem \ref{thm-gwp}. 
This will be proved in Section 4.

\section{Preliminaries}

\subsection{ Function spaces}

Define the Bourgain space $X^{s,b}=X^{s,b}(\R^{1+d})$ by the norm
\begin{align*}
\| u \|_{X^{s,b}} &= \left\| \angles{ \xi }^{s} \angles{ \tau + |\xi|^2}^b \widetilde{u} (\xi, \tau) \right\|_{L^2_{\tau, \xi}},
\end{align*}
where $\widetilde u$ denotes the space-time Fourier transform given by
$$
\widetilde u(\tau, \xi)=\int_{\R^{1+d}} e^{ -i(t\tau+ x\xi)} u(t,x) \ dt dx.
$$
The restriction to time slab $(0, \delta)\times \mathbb R^d $ of the Bourgain space, denoted $X^{s, b}_\delta$, is a Banach space when equipped with the norm
$$
\| u \|_{X^{s, b}_\delta}  = \inf \left\{ \| v \|_{X^{s, b}}: \ v = u \text{ on }  (0, \delta) \times \mathbb{R}^d \right\}.
$$

In addition, we also need the Grevey-Bourgain space,  denoted $X^{\sigma, s,b}=X^{\sigma, s,b}(\R^{1+d})$, defined by the norm
\begin{align*}
\| u \|_{X^{\sigma, s, b}} &= \left\| e^{\sigma | D |}u \right\|_{X^{s, b}} .
\end{align*}
 In the case $\sigma=0$, this space coincides with the Bourgain space $ X^{s,b}$. 
The restrictions of $X^{\sigma, s, b}$ to a time slab $(0, \delta)\times \mathbb R^d$, denoted $X^{\sigma, s, b}_\delta$, is defined in a similar way as above.

\subsection{Linear estimates}

In this subsection we collect linear estimates needed to prove local existence of solution. The $X^{\sigma, s, b}$- estimates given below easily follows 
by substitution $u\rightarrow e^{\sigma | D |}u$ from the properties of $X^{s, b}$-spaces (and its restrictions).  In the case $\sigma=0$, the
proofs of the first two lemmas below can be found in section 2.6 of \cite{t06}, 
 whereas the third lemma follows by the argument used to prove Lemma 3.1 of \cite{ckstt04} and the fourth lemma is the standard energy estimate in 
 $X^{s, b}_\delta$-spaces.

\begin{lemma}\label{embed}
Let $\sigma \ge 0$, $s \in \mathbb{R}$ and $b > 1/2$. Then $X^{\sigma,s,b} \subset C(\mathbb R,G^{\sigma,s})$ and
\[
  \sup_{t \in \mathbb{R}} \| u(t) \|_{G^{\sigma, s}} \leq C \| u \|_{X^{\sigma, s, b}},
\]
where the constant $C > 0$ depends only on $b$.
\end{lemma}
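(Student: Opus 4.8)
The plan is to reduce everything to the case $\sigma = 0$ by the substitution $u \mapsto v := e^{\sigma | D |}u$, and then prove the resulting embedding $X^{s,b} \subset C(\R, H^s)$ directly on the Fourier side. From the definitions one has $\norm{u}_{X^{\sigma,s,b}} = \norm{v}_{X^{s,b}}$ and, since $\norm{f}_{G^{\sigma,s}} = \norm{e^{\sigma|D|}f}_{H^s}$, also $\norm{u(t)}_{G^{\sigma,s}} = \norm{v(t)}_{H^s}$ for every $t$. Thus multiplication by $e^{\sigma|D|}$ is an isometry carrying $G^{\sigma,s}$ onto $H^s$ and $X^{\sigma,s,b}$ onto $X^{s,b}$, so it suffices to prove the lemma for $\sigma=0$; the general statement then follows by applying the $\sigma=0$ result to $v$ and transporting the conclusion (bound and continuity) back through the isometry.

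For the $\sigma=0$ bound I would pass to the spatial Fourier transform. Writing $\widehat{u}(t,\xi)$ for the transform of $u(t,\cdot)$, Fourier inversion in $\tau$ gives $\widehat{u}(t,\xi) = \frac{1}{2\pi}\int_\R e^{it\tau}\,\widetilde u(\tau,\xi)\,d\tau$, hence $\abs{\widehat{u}(t,\xi)} \le \frac{1}{2\pi}\int_\R \abs{\widetilde u(\tau,\xi)}\,d\tau$. The key step is Cauchy--Schwarz in $\tau$ against the weight $\angles{\tau+|\xi|^2}^{-b}$:
\[
\int_\R \abs{\widetilde u(\tau,\xi)}\,d\tau \le \left(\int_\R \angles{\tau+|\xi|^2}^{-2b}\,d\tau\right)^{\half}\left(\int_\R \angles{\tau+|\xi|^2}^{2b}\abs{\widetilde u(\tau,\xi)}^2\,d\tau\right)^{\half}.
\]
By translation invariance the first factor equals $C_b := \left(\int_\R \angles{\eta}^{-2b}\,d\eta\right)^{1/2}$, which is \emph{finite precisely because $b>1/2$}; this is the one place the hypothesis on $b$ is used, and it is the crux of the estimate. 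Multiplying through by $\angles{\xi}^{2s}$, integrating in $\xi$, and applying Fubini then yields $\sup_t \norm{u(t)}_{H^s}^2 \ls_b \norm{u}_{X^{s,b}}^2$, which is the claimed inequality with $C$ depending only on $b$.

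Finally I would verify continuity $t\mapsto u(t)\in H^s$. The same Cauchy--Schwarz bound shows $\widetilde u(\cdot,\xi)\in L^1_\tau$ for almost every $\xi$, so for such $\xi$ the map $t\mapsto \widehat{u}(t,\xi)$ is continuous by dominated convergence in $\tau$. To upgrade this to continuity in $H^s$ I would observe that, as $t_n\to t$, the integrand $\angles{\xi}^{2s}\abs{\widehat{u}(t_n,\xi)-\widehat{u}(t,\xi)}^2$ tends to $0$ pointwise and is dominated by $4C_b^2\,\angles{\xi}^{2s}\int_\R \angles{\tau+|\xi|^2}^{2b}\abs{\widetilde u(\tau,\xi)}^2\,d\tau$, which is $\xi$-integrable; a second application of dominated convergence gives $\norm{u(t_n)-u(t)}_{H^s}\to 0$. (Alternatively one proves continuity first for Schwartz data, where it is immediate, and extends to all of $X^{s,b}$ using the uniform bound together with density.) I expect no serious obstacle here: the only substantive point in the whole argument is the integrability of $\angles{\tau+|\xi|^2}^{-2b}$ at $b>1/2$, and the remainder is bookkeeping with Fourier conventions and routine dominated-convergence arguments.
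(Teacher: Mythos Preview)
Your argument is correct and follows the same approach the paper indicates: reduce to $\sigma=0$ via the isometry $u\mapsto e^{\sigma|D|}u$, then invoke the standard $X^{s,b}\subset C(\R,H^s)$ embedding (the paper simply cites \cite[Section~2.6]{t06} for the latter, while you supply the routine Cauchy--Schwarz-in-$\tau$ proof). There is nothing to add.
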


\begin{lemma}\label{exponent}
Let $\sigma \ge 0$, $s \in \mathbb{R}$, $-1/2 < b < b' < 1/2$ and $\delta > 0$.  Then
\[
\| u \|_{X^{\sigma, s, b}_\delta} \leq C \delta^{b' - b} \| u \|_{X^{\sigma, s, b'}_\delta},
\]
where $C$ depends only on $b$ and $b'$.
\end{lemma}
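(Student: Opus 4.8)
\textbf{Proof plan for Lemma \ref{exponent}.}

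The plan is to reduce the $\sigma \ge 0$ case to the already-known case $\sigma = 0$ by the substitution indicated in the preamble to the subsection. Concretely, set $v = e^{\sigma | D |} u$, so that by the definition of the Gevrey--Bourgain norm we have $\| u \|_{X^{\sigma, s, b}} = \| v \|_{X^{s, b}}$, and correspondingly $\| u \|_{X^{\sigma, s, b}_\delta} = \| v \|_{X^{s, b}_\delta}$ for the restriction norms. The operator $e^{\sigma | D |}$ is a Fourier multiplier in the spatial variable alone and therefore commutes with the time-restriction operation defining the $\delta$-norms, so the correspondence $u \leftrightarrow v$ is compatible with passing to the time slab $(0,\delta) \times \R^d$. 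Thus the claimed inequality
\[
\| u \|_{X^{\sigma, s, b}_\delta} \leq C \delta^{b' - b} \| u \|_{X^{\sigma, s, b'}_\delta}
\]
is equivalent to
\[
\| v \|_{X^{s, b}_\delta} \leq C \delta^{b' - b} \| v \|_{X^{s, b'}_\delta},
\]
which is precisely the $\sigma = 0$ statement.

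It therefore suffices to establish the $\sigma = 0$ inequality, and for that I would invoke the standard result from section 2.6 of \cite{t06}, exactly as the excerpt indicates the first two lemmas are drawn from there. For completeness I would recall the mechanism: after choosing an extension $w$ of $v$ off the time slab that nearly attains the infimum defining $\| v \|_{X^{s, b'}_\delta}$, one localizes in time with a smooth cutoff and uses the elementary one-dimensional estimate
\[
\| \psi(t/\delta)\, g \|_{H^b_t} \leq C \delta^{b' - b} \| g \|_{H^{b'}_t}
\qquad (-1/2 < b < b' < 1/2),
\]
applied to the profile in the modulation variable $\tau + | \xi |^2$ at each fixed frequency $\xi$; integrating the square in $\xi$ against the weight $\angles{\xi}^{2s}$ and taking the infimum over admissible extensions yields the restriction-norm inequality with a constant depending only on $b$ and $b'$. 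The gain $\delta^{b' - b}$ is positive since $b' > b$.

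The only genuine point requiring care is the interchange between the spatial multiplier $e^{\sigma | D |}$ and the time-restriction infimum; this is the step I would expect to be the crux of the argument, modest as it is. Because $e^{\sigma | D |}$ acts only on the $\xi$ variable while restriction to $(0,\delta)$ acts only on $t$, the two operations commute, and the map $w \mapsto e^{\sigma | D |} w$ is a bijection on extensions preserving the agreement on the time slab; hence the infima defining the two restriction norms correspond under $u \leftrightarrow v$. Once this compatibility is recorded, the inequality transfers verbatim from $\sigma = 0$, and the constant $C = C(b, b')$ is inherited unchanged from the Sobolev-in-time estimate.
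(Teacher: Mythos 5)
Your argument coincides with the paper's: the paper proves this lemma precisely by the substitution $u \mapsto e^{\sigma|D|}u$, reducing to the $\sigma = 0$ case, which it cites from Section 2.6 of \cite{t06}. Your proposal simply makes explicit two points the paper leaves implicit --- that the spatial multiplier $e^{\sigma|D|}$ commutes with time restriction (so the infima defining the $\delta$-norms correspond) and the cutoff mechanism behind the $\sigma = 0$ estimate --- both of which are correct.
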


\begin{lemma}\label{lm-restrict}
Let $\sigma \ge 0$, $s \in \mathbb R$, $-1/2 < b < 1/2$ and $\delta > 0$. Then for any time interval $I \subset [0,\delta]$ we have
\[
  \norm{\chi_{I} u}_{X^{\sigma,s,b}} \le C \norm{u}_{X^{\sigma,s,b}_\delta},
\]
where $\chi_I(t)$ is the characteristic function of $I$, and $C$ depends only on $b$.
\end{lemma}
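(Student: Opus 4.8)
The plan is to strip away all the inessential structure by a sequence of reductions and then prove a single one--dimensional multiplier bound, in which the hypothesis $-\half < b < \half$ enters in a sharp way.

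First I would reduce to $\sigma = 0$. Since $e^{\sigma\abs{D}}$ is a Fourier multiplier in the space variable only, it commutes with multiplication by the time cutoff $\chi_I(t)$, so $e^{\sigma\abs{D}}(\chi_I u) = \chi_I\bigl(e^{\sigma\abs{D}}u\bigr)$ and therefore $\norm{\chi_I u}_{X^{\sigma,s,b}} = \norm{\chi_I(e^{\sigma\abs{D}}u)}_{X^{s,b}}$; applying the $\sigma = 0$ case to $e^{\sigma\abs{D}}u$ yields the general statement, and the same remark disposes of the restriction norm on the right. Next I would remove the infimum defining $\norm{\cdot}_{X^{s,b}_\delta}$: for any extension $v \in X^{s,b}$ with $v = u$ on $(0,\delta)\times\R^d$ we have $\chi_I u = \chi_I v$, because $I \subset [0,\delta]$; hence $\norm{\chi_I u}_{X^{s,b}} = \norm{\chi_I v}_{X^{s,b}}$, and it suffices to prove the global multiplier estimate $\norm{\chi_I v}_{X^{s,b}} \le C \norm{v}_{X^{s,b}}$ with $C = C(b)$ independent of $I$ and then take the infimum over $v$.

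To analyze this global bound I would pass to the Fourier side, where multiplication by $\chi_I(t)$ becomes convolution in $\tau$ against $\widehat{\chi_I}$. For each fixed $\xi$ the change of variables $\lambda = \tau + \abs\xi^2$ leaves the convolution unchanged and turns the weight $\angles{\tau + \abs\xi^2}^b$ into $\angles\lambda^b$, while the spatial weight $\angles\xi^s$ plays no role. The estimate thus collapses to the scalar claim, uniform in the interval $I$, that multiplication by $\chi_I$ is bounded on $H^b(\R)$ with norm depending only on $b$. By translation invariance of $H^b$ and the splitting $\chi_{[a,c]} = H(\,\cdot - a) - H(\,\cdot - c)$, where $H$ is the Heaviside function, this in turn reduces to the single statement that multiplication by $\sgn$ is bounded on $H^b(\R)$ for $-\half < b < \half$.

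The heart of the matter, and the step I expect to be the main obstacle, is this last claim. On the Fourier side multiplication by $\sgn(t)$ is, up to a constant, the Hilbert transform $\mathcal H$ acting in the dual variable $\tau$, so the desired $H^b$ bound is exactly the boundedness of the multiplication--composition operator $\angles\tau^b\, \mathcal H\, \angles\tau^{-b}$ on $L^2(\R)$, i.e. the boundedness of $\mathcal H$ on the weighted space $L^2\bigl(\angles\tau^{2b}\,\d\tau\bigr)$. By the Muckenhoupt theory this holds precisely when $\angles\tau^{2b}$ is an $A_2$ weight, which for such a power--type weight is the case if and only if $-1 < 2b < 1$, that is $-\half < b < \half$; this is where the hypothesis is used and why it is sharp, the endpoints $b = \pm\half$ producing a logarithmic divergence. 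For an elementary proof avoiding $A_2$ theory one may, in the range $0 < b < \half$, use the Gagliardo seminorm characterization of $H^b$ together with a fractional Hardy inequality localized at each endpoint of $I$, and then pass to $-\half < b < 0$ by duality and to $b = 0$ trivially; in either approach all the analysis, together with the dependence of $C$ on $b$, is concentrated in this final scalar estimate, the preceding reductions being purely formal.
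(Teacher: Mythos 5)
Your proof is correct, and it takes essentially the same route as the paper, which gives no self-contained argument but simply invokes the proof of Lemma 3.1 in \cite{ckstt04}: after the purely formal reductions you describe (to $\sigma = 0$ via commutation of $e^{\sigma\abs{D}}$ with the time cutoff, to an arbitrary extension $v$ since $\chi_I u = \chi_I v$ a.e., and by translation/modulation invariance in $\tau$ to one dimension), the matter is exactly the standard fact that sharp time cutoffs are bounded multipliers on $H^b(\R)$ precisely for $-1/2 < b < 1/2$, which is the boundedness of the Hilbert transform on $L^2(\angles{\tau}^{2b}\d\tau)$ and holds exactly in the $A_2$ range $\abs{2b} < 1$. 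Your identification of where the hypothesis on $b$ enters, and why it is sharp at $b = \pm 1/2$, agrees with the cited argument, and I see no gaps.
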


Next, consider the linear Cauchy problem, for given $g(x,t)$ and $u_0(x)$,
\[\begin{cases}
iu_t + \Delta u = g, \\
u(0) = u_0.
\end{cases}\]
Let $W(t) = e^{it\Delta} $ be the solution group with Fourier symbol $e^{-it|\xi|^2}$.  Then
we can write the solution using the Duhamel formula
\[
u(t) = W(t)u_0 -i \int_0^t W(t - t') g(t') \ dt'.
\]
 Then $u$ satisfies the following $X^{\sigma, s, b}$ energy estimate.
\begin{lemma}\label{lm-linear}
Let $\sigma \ge 0$, $s \in \mathbb{R}$, $1/2 < b \leq 1$ and $0 < \delta \leq 1$.  Then for all $u_0 \in G^{\sigma, s}$ and $F \in X^{\sigma, s, b-1}_\delta$, we have the estimates
\[\begin{aligned}
& \qquad \qquad \| W(t) u_0 \|_{X^{\sigma, s, b}_\delta} \leq C \| u_0 \|_{G^{\sigma, s}},
\\
& \left\| \int_0^t W(t - t') g(t')\ dt' \right\|_{X^{\sigma, s, b}_\delta} \leq C \| g \|_{X^{\sigma, s, b - 1}_\delta},
\end{aligned}\]
where the constant $C > 0$ depends only on $b$.
\end{lemma}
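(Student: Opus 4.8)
The plan is to reduce both estimates to the corresponding classical $X^{s,b}$ energy estimates at $\sigma = 0$ by means of the substitution $u \mapsto e^{\sigma|D|}u$. The crucial observation is that $e^{\sigma|D|}$ is a Fourier multiplier acting only in the spatial variable, so it commutes with the free propagator $W(t)=e^{it\Delta}$ (whose symbol $e^{-it|\xi|^2}$ depends on the same variable) and with the Duhamel integral operator. Moreover, since the multiplier $e^{\sigma|\xi|}$ is everywhere positive, $e^{\sigma|D|}$ acts bijectively on the relevant class of functions and, acting purely in space, it preserves agreement of two functions on any time slab $(0,\delta)\times\R^d$. Consequently the restriction norm transforms in the same way as the full norm:
\[
\|u\|_{X^{\sigma,s,b}_\delta} = \|e^{\sigma|D|}u\|_{X^{s,b}_\delta}.
\]
This identity is the key point, since it lets me transfer estimates freely between $X^{\sigma,s,b}_\delta$ and $X^{s,b}_\delta$.

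For the homogeneous estimate I would write, using commutativity,
\[
\|W(t)u_0\|_{X^{\sigma,s,b}_\delta} = \|e^{\sigma|D|}W(t)u_0\|_{X^{s,b}_\delta} = \|W(t)\,e^{\sigma|D|}u_0\|_{X^{s,b}_\delta},
\]
and then apply the standard $\sigma=0$ estimate $\|W(t)v_0\|_{X^{s,b}_\delta}\le C\|v_0\|_{H^s}$ with $v_0 = e^{\sigma|D|}u_0$; since $\|e^{\sigma|D|}u_0\|_{H^s}=\|u_0\|_{G^{\sigma,s}}$ by definition, this gives the claim. The inhomogeneous estimate is handled identically: setting $v(t)=\int_0^t W(t-t')g(t')\,dt'$, commutativity gives $e^{\sigma|D|}v(t)=\int_0^t W(t-t')\,e^{\sigma|D|}g(t')\,dt'$, so the standard Duhamel bound $\|v\|_{X^{s,b}_\delta}\le C\|g\|_{X^{s,b-1}_\delta}$ applied to $e^{\sigma|D|}g$ yields $\|v\|_{X^{\sigma,s,b}_\delta}\le C\|e^{\sigma|D|}g\|_{X^{s,b-1}_\delta}=C\|g\|_{X^{\sigma,s,b-1}_\delta}$.

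Thus everything rests on the two classical $X^{s,b}$ bounds, whose proofs I would recall from Section 2.6 of \cite{t06}: one fixes a time cutoff $\psi\in C_c^\infty(\R)$ with $\psi\equiv 1$ on $[0,\delta]$ (for $\delta\le 1$) and shows $\|\psi\,W(t)u_0\|_{X^{s,b}}\ls\|u_0\|_{H^s}$ by passing to the space-time Fourier side, where the profile of $W(t)u_0$ concentrates on $\tau=-|\xi|^2$; the inhomogeneous bound follows from the analogous cutoff decomposition of the Duhamel integral, the restriction $1/2<b\le1$ being exactly what makes the relevant $\tau$-integrals converge. The only genuinely new point beyond these known facts is the compatibility of the restriction (infimum-over-extensions) norm with the substitution, and this is where I would be most careful: I must verify that extending $e^{\sigma|D|}u$ off the slab is equivalent to applying $e^{\sigma|D|}$ to an extension of $u$, which holds precisely because $e^{\sigma|D|}$ commutes with multiplication by the time indicator and maps extensions to extensions bijectively. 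Beyond this bookkeeping there is no real obstacle.
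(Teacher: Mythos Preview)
Your proposal is correct and follows exactly the approach indicated in the paper: the paper does not give a separate proof of this lemma but states just before Lemma~\ref{embed} that the $X^{\sigma,s,b}$ estimates ``easily follow by substitution $u\to e^{\sigma|D|}u$ from the properties of $X^{s,b}$-spaces (and its restrictions),'' with the $\sigma=0$ case being the standard energy estimate. Your write-up simply fleshes out this substitution argument, including the verification that the restriction norms transform correctly, so it is the same proof with more detail supplied.
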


\begin{definition}
A pair $(q, r)$ of exponents are called admissible if $2\le q , r \le \infty$, 
$$
\frac2q + \frac dr=\frac d2 \quad \text{and} \quad (q,r,d)\neq (2, \infty, 2).
$$
\begin{lemma}[see \cite{t06}]\label{lm-srtz} 
Let $d\ge 2$ and $(q, r)$ be an admissible pair. Then we have the Strichartz estimate
\begin{equation}
\label{strz-est}
 \| W(t) u_0 \|_{L_t^qL_x^r(\R^{1+d})} \leq C \| u_0 \|_{L^2(\R^{d})}.
\end{equation}
Moreover, for any $b>\frac12$ and $u\in X^{0, b}(\R^{1+d})$ we have
\begin{equation}
\label{strz-transfer}
 \| u \|_{L_t^qL_x^r(\R^{1+d})} \leq C \| u \|_{X^{0, b} (\R^{1+d})}.
\end{equation}
\end{lemma}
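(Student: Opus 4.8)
The plan is to obtain the homogeneous bound \eqref{strz-est} from the pointwise dispersive decay of $W(t) = e^{it\Delta}$ together with the $TT^*$ method and one-dimensional fractional integration, and then to deduce the transference bound \eqref{strz-transfer} from \eqref{strz-est} by expanding $u$ over the modulation variable $\lambda = \tau + \abs{\xi}^2$. First I would record the two elementary estimates for $W(t)$: since the Fourier symbol $e^{-it\abs{\xi}^2}$ is unimodular, Plancherel gives the $L^2$ conservation $\norm{W(t) u_0}_{L^2_x} = \norm{u_0}_{L^2_x}$, while $W(t)$ is convolution with the kernel $c_d\, \abs{t}^{-d/2} e^{i\abs{x}^2/(4t)}$, whose sup-norm is $C\abs{t}^{-d/2}$, so Young's inequality yields the dispersive estimate $\norm{W(t) u_0}_{L^\infty_x} \le C \abs{t}^{-d/2} \norm{u_0}_{L^1_x}$. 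Riesz--Thorin interpolation between these two bounds gives, for every $2 \le r \le \infty$ with conjugate exponent $r'$, the decay estimate $\norm{W(t) u_0}_{L^r_x} \le C \abs{t}^{-d(\half - 1/r)} \norm{u_0}_{L^{r'}_x}$.

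For the space-time estimate I would set $T u_0 = W(t) u_0$, so that $T T^*$ acts by $(T T^* F)(t) = \int_{\R} W(t-s) F(s)\, \d s$. Applying the decay estimate in $x$ for fixed $t, s$ and then taking the $L^q_t$-norm, the Hardy--Littlewood--Sobolev inequality in the single variable $t$ bounds $\norm{T T^* F}_{L^q_t L^r_x}$ by $C \norm{F}_{L^{q'}_t L^{r'}_x}$, provided the fractional-integration exponent satisfies $d(\half - 1/r) = 2/q$ with $0 < 2/q < 1$; this equality is exactly the admissibility relation $\frac2q + \frac dr = \frac d2$. By the standard $TT^*$ duality, boundedness of $T T^* : L^{q'}_t L^{r'}_x \to L^q_t L^r_x$ is equivalent to \eqref{strz-est}, which this establishes for all admissible pairs with $2 < q < \infty$ (the case $q = \infty$, $r = 2$ being just the $L^2$ conservation). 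The main obstacle is the endpoint $(q,r) = (2, 2d/(d-2))$, admissible when $d \ge 3$, where $d(\half - 1/r) = 1$ and the fractional integral is no longer of strong type; here one must replace the single dispersive bound by a dyadic family of bilinear estimates summed via the real-interpolation argument of Keel and Tao, which is the content cited from \cite{t06}. The triple $(2, \infty, 2)$ excluded in the definition is the corresponding $d = 2$ endpoint, for which no such bound holds.

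Finally, for \eqref{strz-transfer} I would substitute $\tau = \lambda - \abs{\xi}^2$ in the Fourier inversion formula to write $u(t,x) = \int_{\R} e^{it\lambda}\, (W(t) G_\lambda)(x)\, \d\lambda$, where $G_\lambda$ is defined by $\widehat{G_\lambda}(\xi) = \widetilde u(\lambda - \abs{\xi}^2, \xi)$. Minkowski's inequality in $\lambda$ followed by \eqref{strz-est} gives $\norm{u}_{L^q_t L^r_x} \le C \int_{\R} \norm{G_\lambda}_{L^2_x}\, \d\lambda$, and splitting the integrand as $\angles{\lambda}^{-b} \cdot \angles{\lambda}^{b}\norm{G_\lambda}_{L^2_x}$, Cauchy--Schwarz in $\lambda$ (with $\int_{\R} \angles{\lambda}^{-2b}\, \d\lambda < \infty$ since $b > \half$) together with Plancherel and the reverse change of variables to $\tau$ bounds the right side by $C \norm{u}_{X^{0,b}}$, which is \eqref{strz-transfer}.
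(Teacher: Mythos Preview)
Your argument is correct and is precisely the standard route: dispersive decay plus $TT^*$ and Hardy--Littlewood--Sobolev for the non-endpoint pairs, Keel--Tao for the endpoint, and the modulation-foliation transference principle for \eqref{strz-transfer}. The paper itself does not supply a proof of this lemma but simply cites \cite{t06}, where exactly this argument appears; so there is nothing to compare against, and your sketch reproduces the intended reference.
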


\end{definition}

\subsection{Multilinear estimates and local result}

By duality, H\"{o}lder, Sobolev and the Strichartz estimate \eqref{strz-transfer} we obtain 
the following multilinear estimates. The proof will be given in the last section.

\begin{lemma}\label{lm-nonlinearest}
 Let $U_j$ denotes $u_j$ or $\overline u_j$. Let $d=1, 2, 3$,  $\sigma\ge 0$ and $b>\frac12$. Then we have the estimates
\begin{align}
\label{nonlinearest-1}
\|   \prod_{j=1}^3 U_j   \|_{X^{0, -b} }&\ls  \|   u_1  \|_{X^{1, b} }  \|   u_2  \|_{X^{0, b}}   \|   u_3  \|_{X^{0, b}} ,
\\
\label{nonlinearest-2}
\|   \prod_{j=1}^3U_j  \|_{L^2_{t,x}  }&\ls    \|   u_1  \|_{X^{1, b} } \|   u_2  \|_{X^{1, b} }  \|   u_3  \|_{X^{0, b} },
   \\
\label{nonlinearest-3}
\|    \prod_{j=1}^3U_j \|_{X^{\sigma, 1, 0}  }&\ls \|   u_1  \|_{X^{\sigma, 1, b} } \|   u_2  \|_{X^{\sigma, 1, b} }  \|   u_3  \|_{X^{\sigma, 1, b} }.
\end{align}
\end{lemma}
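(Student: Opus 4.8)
The plan is to prove the three estimates in the order \eqref{nonlinearest-2}, \eqref{nonlinearest-1}, \eqref{nonlinearest-3}. The workhorse in each case is the composite \textbf{Strichartz--Sobolev} bound $\norm{v}_{L^q_t L^\rho_x} \ls \norm{v}_{X^{s,b}}$ for $b>1/2$, obtained by applying \eqref{strz-transfer} to $\angles{D}^s v$ and then the Sobolev embedding $\angles{D}^{-s} L^r_x \subset L^\rho_x$, $\tfrac1\rho = \tfrac1r - \tfrac sd \ge 0$. Here $(q,r)$ ranges either over admissible pairs (Lemma~\ref{lm-srtz}, $d\ge2$) or over the energy endpoint $(q,r)=(\infty,2)$, for which the transfer step is just Lemma~\ref{embed}; the latter gives $\norm{v}_{L^\infty_t L^\rho_x}\ls\norm{v}_{X^{s,b}}$ with $\tfrac1\rho=\tfrac12-\tfrac sd$, which in $d=1$ already yields $L^\infty_{t,x}$ from $X^{1,b}$. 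The derivative-free factors will be placed in Strichartz norms or simply in $L^2_{t,x}$ via $\norm{v}_{L^2_{t,x}}=\norm{v}_{X^{0,0}}\le\norm{v}_{X^{0,b}}$, and everything is assembled by H\"older in $(t,x)$, so that the time-reciprocals and the space-reciprocals of the chosen exponents each add up to the exponent of the target norm. Throughout, passing between $U_j$ and $u_j$ is harmless since conjugation preserves moduli of Fourier transforms.

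For \eqref{nonlinearest-2} the derivative budget is $\{1,1,0\}$ and the target is $L^2_{t,x}$, so I need the time- and space-reciprocals to sum to $1/2$. In $d=1$ I put $U_1,U_2\in L^\infty_{t,x}$ (composite bound with $s=1$, using $H^1(\R)\subset L^\infty$) and $U_3\in L^2_{t,x}$, which closes immediately. In $d=2,3$ the embedding into $L^\infty_x$ fails, so I instead send the two factors carrying a derivative to a finite exponent and use a genuine Strichartz pair for the third; the tightest and decisive case is $d=3$, where $U_1,U_2\in L^\infty_t L^6_x$ (from $H^1(\R^3)\subset L^6$, i.e.\ the endpoint $s=1$) and $U_3\in L^2_t L^6_x$ (the admissible pair $(2,6)$), the three space-reciprocals summing to $1/2$ and the time-reciprocals to $1/2$. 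This exhausts the full $H^1$ regularity of $u_1,u_2$, which is why \eqref{nonlinearest-2} is sharp in three dimensions.

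For \eqref{nonlinearest-1} I dualize: $\norm{\prod_j U_j}_{X^{0,-b}}=\sup\{\Abs{\innerprod{\prod_j U_j}{w}}:\norm{w}_{X^{0,b}}\le1\}$, reducing matters to the quadrilinear bound $\norm{U_1U_2U_3\overline w}_{L^1_{t,x}}\ls\norm{u_1}_{X^{1,b}}\norm{u_2}_{X^{0,b}}\norm{u_3}_{X^{0,b}}\norm{w}_{X^{0,b}}$, where now only $u_1$ carries a derivative and the reciprocals must sum to $1$. In $d=2$ all four factors may be taken in $L^4_{t,x}$ via the admissible pair $(4,4)$; in $d=3$ one can take $U_1\in L^\infty_t L^6_x$ and $U_2,U_3,w\in L^3_t L^{18/5}_x$, the pair $(3,18/5)$ being admissible. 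Finally, for \eqref{nonlinearest-3} I first remove the analytic weight: since $\abs{\xi}\le\abs{\xi_1}+\abs{\xi_2}+\abs{\xi_3}$ whenever $\xi=\pm\xi_1\pm\xi_2\pm\xi_3$ and $e^{\sigma\abs{\cdot}}$ is even, the Fourier transform of $e^{\sigma\abs{D}}(U_1U_2U_3)$ is dominated pointwise by the convolution of those of the $e^{\sigma\abs{D}}U_j$, which reduces \eqref{nonlinearest-3} to its $\sigma=0$ form $\norm{\prod_j U_j}_{X^{1,0}}\ls\prod_j\norm{u_j}_{X^{1,b}}$. Writing this as $\norm{\angles{D}\prod_j U_j}_{L^2_{t,x}}$ and using $\angles{\xi}\ls\angles{\xi_1}+\angles{\xi_2}+\angles{\xi_3}$ to move $\angles{D}$ onto one factor at a time, each resulting term is exactly \eqref{nonlinearest-2} with the indices permuted, which finishes the proof.

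The main obstacle is not any single inequality but the simultaneous exponent accounting in $d=2,3$: the Sobolev gain available from $H^1$, the admissibility constraint $2/q+d/r=d/2$ with $(q,r,d)\neq(2,\infty,2)$, and the two H\"older balances must all be met at once, and in $d=3$ they leave no slack, so the choices above are essentially forced. By contrast the structural steps---duality for \eqref{nonlinearest-1}, and the weight-splitting together with the Leibniz reduction for \eqref{nonlinearest-3}---are routine, once one checks that the reflection $\xi\mapsto-\xi$ induced by conjugation interacts neither with the even weight $e^{\sigma\abs{\cdot}}$ nor with the transfer estimate \eqref{strz-transfer}.
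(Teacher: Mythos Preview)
Your approach is essentially the paper's: duality for \eqref{nonlinearest-1}, H\"older with Strichartz--Sobolev placements, and the weight-splitting plus Leibniz reduction for \eqref{nonlinearest-3}. Your $d=3$ choice $(3,18/5)$ for the three undifferentiated factors in \eqref{nonlinearest-1} is a harmless variant of the paper's $(2,6)$--$(\infty,2)$ split, and your handling of \eqref{nonlinearest-2} in $d=3$ coincides with the paper's exactly.

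There is one genuine omission: you never treat \eqref{nonlinearest-1} in $d=1$. As you yourself note, Lemma~\ref{lm-srtz} is stated only for $d\ge2$, and with just the energy endpoint $(\infty,2)$ the derivative-free factors are stuck in $L^p_tL^2_x$; after placing $U_1\in L^\infty_{t,x}$ you cannot make $U_2U_3\overline w$ land in $L^1_{t,x}$ from three copies of $X^{0,b}$ using only that. The paper closes this case by invoking the one-dimensional trilinear Strichartz bound $\|U_1U_2U_3\|_{L^2_{t,x}(\R^{1+1})}\ls\prod_j\|u_j\|_{X^{0,b}}$ from Gr\"unrock \cite{g01}, which immediately gives both \eqref{nonlinearest-1} and \eqref{nonlinearest-2} in $d=1$ (the former via $\|\cdot\|_{X^{0,-b}}\le\|\cdot\|_{L^2_{t,x}}$). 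You should either cite this, or cite the $1$d Strichartz pair $(6,6)$ and run your quadrilinear H\"older with $U_1\in L^\infty_{t,x}$ and $U_2,U_3,w\in L^6_{t,x}$.

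A minor caution on \eqref{nonlinearest-2} in $d=2$: your ``same strategy as $d=3$'' remark needs a different allocation of exponents, since taking $q_3=2$ forces $r_3=\infty$ by admissibility, which is the excluded endpoint. The paper's choice $U_1,U_2\in L^8_{t,x}$ (via the admissible pair $(8,8/3)$ and Sobolev with $s=\tfrac12$) together with $U_3\in L^4_{t,x}$ works; you may want to record this explicitly.
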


By Picard iteration in the $X^{\sigma, 1, b}_\delta$-space and application of Lemma \ref{lm-linear}, Lemma \ref{lm-restrict} and \eqref{nonlinearest-3} to the iterates one obtains the following local result (for details see \cite[proof of Theorem 1 therein]{{sd16}}).
\begin{theorem}\label{thm-lwp}
Let $d=1, 2, 3$, $\sigma > 0$ and  \footnote{ We use the notation $a\pm=a\pm \varepsilon$ for sufficiently small $\varepsilon>0$. }  $b=\frac12+$. Then for any $u_0 \in G^{\sigma,1}$ there exists a time $\delta > 0$ and a unique solution $u$ of \eqref{nls} on the time interval $(0,\delta)$ such that
\[
u \in C([0,\delta], G^{\sigma,1} ).
\]
Moreover, the solution depends continuously on the data $u_0$, and we have
\[
  \delta = c_0(1+\|u_0\|_{G^{\sigma, 1}  })^{-4-}
\]
for some constant $c_0 > 0$. Furthermore, the solution $u$ satisfies the bound 
\begin{equation} \label{solnbound}
\| u\|_{X^{ \sigma, 1, b}_\delta}\le C \| u_0\|_{G^{\sigma, 1}  },  
\end{equation}  
where $C$ depends only on $b$.
\end{theorem}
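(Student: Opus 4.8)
The plan is to solve the integral (Duhamel) formulation of \eqref{nls},
\[
u(t) = W(t) u_0 - i \int_0^t W(t-t') \abs{u}^2 u(t') \d t',
\]
by a fixed point argument for the map $\Phi(u) := W(t)u_0 - i\int_0^t W(t-t')\abs{u}^2u(t')\d t'$ on a closed ball $B_R = \{ u : \norm{u}_{X^{\sigma,1,b}_\delta} \le R \}$ in the restriction space $X^{\sigma,1,b}_\delta$, with $b=\frac12+$, for a radius $R$ and a time $\delta$ to be chosen in terms of $\norm{u_0}_{G^{\sigma,1}}$.

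First I would estimate $\Phi(u)$ using the energy estimate of Lemma \ref{lm-linear}, which gives
\[
\norm{\Phi(u)}_{X^{\sigma,1,b}_\delta} \le C\norm{u_0}_{G^{\sigma,1}} + C\bignorm{\abs{u}^2u}_{X^{\sigma,1,b-1}_\delta}.
\]
Since $b-1 = -\frac12+$ satisfies $-\frac12 < b-1 < 0$, Lemma \ref{exponent} lets me trade regularity in the time weight for a positive power of $\delta$:
\[
\bignorm{\abs{u}^2u}_{X^{\sigma,1,b-1}_\delta} \le C\delta^{1-b}\bignorm{\abs{u}^2u}_{X^{\sigma,1,0}_\delta}.
\]
Passing to extensions, and invoking Lemma \ref{lm-restrict} to pass between the restriction norm and the characteristic-function truncation, I then apply the trilinear estimate \eqref{nonlinearest-3} (with each factor $U_j$ taken to be $u$ or $\overline u$) to obtain $\bignorm{\abs{u}^2u}_{X^{\sigma,1,0}_\delta} \ls \norm{u}_{X^{\sigma,1,b}_\delta}^3$, so that for $u \in B_R$,
\[
\norm{\Phi(u)}_{X^{\sigma,1,b}_\delta} \le C\norm{u_0}_{G^{\sigma,1}} + C\delta^{1-b}R^3.
\]
Choosing $R = 2C\bigl(1+\norm{u_0}_{G^{\sigma,1}}\bigr)$ and then $\delta$ small enough that $C\delta^{1-b}R^2 \le \frac12$ makes $\Phi$ map $B_R$ into itself. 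Since $1-b = \frac12-$, solving this inequality for $\delta$ gives $\delta \sim R^{-2/(1-b)} \sim (1+\norm{u_0}_{G^{\sigma,1}})^{-4-}$, which is exactly the claimed lifespan.

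Next I would establish the contraction property: writing the trilinear difference $\abs{u}^2u - \abs{v}^2v$ as a telescoping sum and applying \eqref{nonlinearest-3} to each term gives
\[
\norm{\Phi(u)-\Phi(v)}_{X^{\sigma,1,b}_\delta} \le C\delta^{1-b}\bigl(\norm{u}_{X^{\sigma,1,b}_\delta}^2 + \norm{v}_{X^{\sigma,1,b}_\delta}^2\bigr)\norm{u-v}_{X^{\sigma,1,b}_\delta},
\]
so shrinking $\delta$ further if necessary yields a contraction on $B_R$. The Banach fixed point theorem then produces a unique solution $u \in B_R$, and the bound \eqref{solnbound} is immediate from $u \in B_R$. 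Continuity in time, $u \in C([0,\delta],G^{\sigma,1})$, follows from the embedding $X^{\sigma,1,b} \subset C(\mathbb R, G^{\sigma,1})$ for $b > \frac12$ in Lemma \ref{embed}, and continuous dependence on $u_0$ follows by applying the same difference estimate to two solutions issuing from different data.

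Since the genuinely hard analytic input — the trilinear estimate \eqref{nonlinearest-3}, which encodes the Strichartz and Sobolev structure in dimensions $d=1,2,3$ — is supplied by Lemma \ref{lm-nonlinearest}, the remaining work is the standard contraction bookkeeping. The point demanding the most care is the precise power of $\delta$: one must combine the loss-free trilinear bound in $X^{\sigma,1,0}$ with the gain $\delta^{1-b}=\delta^{\frac12-}$ from Lemma \ref{exponent}, and then invert the resulting algebraic relation to read off the sharp exponent $-4-$ in the lifespan.
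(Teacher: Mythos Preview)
Your argument is correct and matches the paper's approach: the paper does not give a detailed proof but simply states that the result follows ``by Picard iteration in the $X^{\sigma,1,b}_\delta$-space and application of Lemma~\ref{lm-linear}, Lemma~\ref{lm-restrict} and \eqref{nonlinearest-3} to the iterates'' (referring to \cite{sd16} for details), which is exactly the contraction scheme you carry out, including the use of Lemma~\ref{exponent} to extract the factor $\delta^{1-b}=\delta^{\frac12-}$ that yields the lifespan exponent $-4-$. One small cosmetic point: to recover the bound \eqref{solnbound} in the exact form $\norm{u}_{X^{\sigma,1,b}_\delta}\le C\norm{u_0}_{G^{\sigma,1}}$ (rather than $C(1+\norm{u_0}_{G^{\sigma,1}})$), take $R=2C\norm{u_0}_{G^{\sigma,1}}$ and absorb the constraint $\delta\le 1$ from Lemma~\ref{lm-linear} into the choice $\delta=c_0(1+\norm{u_0}_{G^{\sigma,1}})^{-4-}$.
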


\section{Almost conservation law }
Define
$$
A_\sigma(t)= \| u(t) \|^2_{G^{\sigma, 1}}+ \frac12\| e^{\sigma |D|  } u(t)\|_{L^4}^4.
$$
For $\sigma=0$ we have from \eqref{consv-mass} and \eqref{consv-energy} the conservation
$$
A_0(t)=A_0(0) \quad \text{for all} \ t.
$$
However, this fails to hold for $\sigma>0$. In what follows we will nevertheless prove, for $\delta$ as in Theorem \ref{thm-lwp}, the approximate conservation
$$
\sup_{t\in [0,  \delta]} A_\sigma(t)=A_\sigma(0)+ R_{\sigma}(0),
$$
where the quantity $ R_{\sigma}(0)$ satisfies the bound $$ R_{\sigma}(0)\le C \sigma A_\sigma^2(0) \left(1+  A_\sigma(0)\right).$$
 In the limit as $\sigma\rightarrow 0 $, we have 
 $R_\sigma(0) \rightarrow 0$, and hence we recover the conservation $A_0(t)=A_0(0)$.

 To this end, we note from \eqref{solnbound} that
 \begin{equation} \label{solnbound-A}
\| u\|_{X^{ \sigma, 1, b}_\delta}\le C [A_{\sigma}(0)]^\frac12.
\end{equation}  
 
\begin{theorem}\label{thm-approx}
Let $d=1, 2, 3$ and $\delta$ be as in Theorem \ref{thm-lwp}. There exists $C > 0$ such that for any $\sigma > 0$ and any solution
 $u \in X^{\sigma, 1, b}_\delta$ to the Cauchy problem \eqref{nls} on the time interval  $[0,\delta]$, we have the estimate
\begin{equation}\label{approx1}
\sup_{t\in [0,  \delta]} A_\sigma(t) \leq A(0) + C \sigma \| u \|^4_{X^{\sigma, 1, b}_\delta} \left(1+  \| u \|^2_{X^{\sigma, 1, b}_\delta}\right).
\end{equation} 
Moreover, we have
\begin{equation}\label{approx2}
 \sup_{t\in [0,  \delta]} A_\sigma (t) \leq A_\sigma(0) + C \sigma A_\sigma^2(0) \left(1+  A_\sigma(0)\right).
\end{equation} 
\end{theorem}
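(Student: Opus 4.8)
\emph{Strategy.} The plan is to run a mass--energy argument for the modified profile $v=e^{\sigma|D|}u$. Since $\norm{u}_{G^{\sigma,1}}=\norm{v}_{H^1}$ and $\norm{e^{\sigma|D|}u}_{L^4}=\norm{v}_{L^4}$, we have $A_\sigma(t)=\norm{v(t)}_{L^2}^2+\norm{\nabla v(t)}_{L^2}^2+\tfrac12\norm{v(t)}_{L^4}^4$, which is precisely the mass-plus-energy functional of $v$; and applying $e^{\sigma|D|}$ to \eqref{nls} shows that $v$ solves $iv_t+\Delta v=N$ with $N=e^{\sigma|D|}(|u|^2u)$. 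First I would record the defect identity. Because the mass and the energy of \eqref{consv-mass}--\eqref{consv-energy} are \emph{exactly} conserved along the genuine cubic flow $iw_t+\Delta w=|w|^2w$, only the discrepancy $\mathcal E:=N-|v|^2v$ drives the growth of $A_\sigma$. Differentiating $A_\sigma$, inserting $v_t=i\Delta v-iN$, and discarding the self-adjoint pieces that vanish under $\im$ gives
\[
\frac{d}{dt}A_\sigma(t)=2\,\im\,\innerprod{\mathcal E}{\langle D\rangle^2v+|v|^2v}_{L^2_x}.
\]
Integrating over $[0,t]\subset[0,\delta]$ and taking the supremum reduces \eqref{approx1} to estimating the quartic pairing $\int_0^\delta\innerprod{\mathcal E}{\langle D\rangle^2v}\d t$ and the sextic pairing $\int_0^\delta\innerprod{\mathcal E}{|v|^2v}\d t$.

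\emph{The factor $\sigma$.} The gain of $\sigma$ is entirely contained in $\mathcal E$. Writing $\hat u(\xi)=e^{-\sigma|\xi|}\hat v(\xi)$ and comparing the two cubic convolutions frequency by frequency, with $\xi=\xi_1-\xi_2+\xi_3$,
\[
\bigabs{\hat{\mathcal E}(\xi)}\le \int_{\xi_1-\xi_2+\xi_3=\xi}\bigl(1-e^{-\sigma(|\xi_1|+|\xi_2|+|\xi_3|-|\xi|)}\bigr)\prod_{j}\bigabs{\hat v_j(\xi_j)}\le \sigma\int_{\xi_1-\xi_2+\xi_3=\xi}\bigl(|\xi_1|+|\xi_2|+|\xi_3|-|\xi|\bigr)\prod_j\bigabs{\hat v_j(\xi_j)},
\]
using $0\le 1-e^{-x}\le x$. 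The elementary bound $|\xi_1|+|\xi_2|+|\xi_3|-|\xi|\le 2\angles{\xi_{(2)}}$, where $\angles{\xi_{(2)}}$ denotes the \emph{second}-largest of $\angles{\xi_1},\angles{\xi_2},\angles{\xi_3}$, then exhibits $\mathcal E$ as, up to the prefactor $\sigma$, a cubic expression carrying at most one extra derivative, and crucially shows that this derivative is never forced onto the top frequency.

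\emph{Reduction to the multilinear estimates.} The sextic term is harmless: bounding $\bigabs{\int_0^\delta\innerprod{\mathcal E}{|v|^2v}\d t}\le\norm{\mathcal E}_{X^{0,-b}_\delta}\norm{|v|^2v}_{X^{0,b}_\delta}$, the first factor is $\ls\sigma\norm{u}^3_{X^{\sigma,1,b}_\delta}$ by \eqref{nonlinearest-1}, the lone excess-derivative occupying the single $X^{1,b}$ slot, while the second is $\ls\norm{u}^3_{X^{\sigma,1,b}_\delta}$ by the cubic estimate underlying \eqref{nonlinearest-3}; here there is a large surplus of derivatives, producing the $\sigma\norm{u}^6$ contribution. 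The quartic term is the crux: pairing in $X^{1,-b}$ against $X^{-1,b}$ yields $\bigabs{\int_0^\delta\innerprod{\mathcal E}{\langle D\rangle^2v}\d t}\le\norm{\mathcal E}_{X^{1,-b}_\delta}\norm{v}_{X^{1,b}_\delta}$, so everything rests on
\[
\norm{\mathcal E}_{X^{1,-b}_\delta}\ls \sigma\,\norm{u}^3_{X^{\sigma,1,b}_\delta}.
\]
Summing the two contributions, passing between global and restricted norms via Lemma \ref{lm-restrict}, gives \eqref{approx1}; finally \eqref{approx2} follows by inserting the solution bound \eqref{solnbound-A} into the right-hand side of \eqref{approx1}, since then $\norm{u}^4\ls A_\sigma(0)^2$ and $\norm{u}^2\ls A_\sigma(0)$.

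\emph{Main obstacle.} I expect the whole argument to hinge on the quartic estimate above, which is scaling-critical. A raw count is unfavorable: the output weight $\angles{\xi}$ together with the single derivative from the excess must be distributed over three inputs, each of which can only afford the one derivative of $X^{1,b}$, and in the configuration where two input frequencies are comparably large while the third is low (so that the output is also high) the pure Strichartz bounds \eqref{nonlinearest-1}--\eqref{nonlinearest-2} fall short by exactly one derivative in $d=2,3$. The saving grace, which I would have to make explicit, is the dispersive smoothing built into the Bourgain norms: precisely in that critical regime the resonance $\Omega=-|\xi_1|^2+|\xi_2|^2-|\xi_3|^2+|\xi|^2$ is large, the excess is dominated by $|\Omega|^{1/2}$, and the modulation weight $\angles{\tau+|\xi|^2}^{-b}$ of $X^{1,-b}$ with $b>\tfrac12$ absorbs the offending derivative (this is the very mechanism that will later yield the $1/|t|$ rate). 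Matching this modulation gain against the excess bound, and routing the remaining derivatives into \eqref{nonlinearest-1}/\eqref{nonlinearest-2} with the excess placed in the derivative-free slot, is the delicate step around which the proof turns.
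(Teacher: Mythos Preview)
Your strategy matches the paper's proof almost exactly: introduce $v=e^{\sigma|D|}u$, derive the defect identity for the mass--plus--energy functional, and reduce to bounding the quartic and sextic pairings with $\mathcal E$. The $\sigma$-gain via $1-e^{-x}\le x$ and the control of the excess by the second-largest (median) input frequency is precisely the mechanism the paper uses (its Lemma~\ref{lm-f-est}).

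Where you go astray is in the ``Main obstacle'' paragraph. No modulation or resonance analysis is needed: the median-frequency bound you already recorded closes the quartic estimate \emph{directly} from \eqref{nonlinearest-1}. Ordering $|\xi_1|\le|\xi_2|\le|\xi_3|$, the excess is $\lesssim\angles{\xi_2}$ and the output weight $\angles{\xi}\lesssim\angles{\xi_3}$, so the total weight $\angles{\xi}\cdot\angles{\xi_{(2)}}$ is dominated by $\angles{\xi_2}\angles{\xi_3}$. Distribute these two derivatives onto $w_2,w_3$ and place the \emph{smallest}-frequency factor $w_1$ in the single $X^{1,b}$ slot of \eqref{nonlinearest-1}:
\[
\norm{\mathcal E}_{X^{1,-b}}\ \lesssim\ \sigma\,\bignorm{w_1\cdot\overline{\angles{D}w_2}\cdot\overline{\angles{D}w_3}}_{X^{0,-b}}\ \lesssim\ \sigma\,\norm{w_1}_{X^{1,b}}\,\norm{\angles{D}w_2}_{X^{0,b}}\,\norm{\angles{D}w_3}_{X^{0,b}}\ =\ \sigma\,\norm{v}_{X^{1,b}}^3.
\]
The ``high--high--low, output high'' configuration you worry about is exactly the one this placement handles cleanly: the two derivatives sit on the two high-frequency inputs, and the $X^{1,b}$ slot is wasted on the low-frequency factor, which is harmless. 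The resonance function $\Omega$ plays no role in the paper's argument.

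A smaller point: for the sextic term, your pairing $\norm{\mathcal E}_{X^{0,-b}}\,\norm{|v|^2v}_{X^{0,b}}$ requires a trilinear bound \emph{into} $X^{0,b}$ with $b>\tfrac12$, which is not among \eqref{nonlinearest-1}--\eqref{nonlinearest-3} and is genuinely stronger. The paper instead pairs in $L^2_{t,x}\times L^2_{t,x}$, bounding $\norm{|v|^2v}_{L^2_{t,x}}$ by \eqref{nonlinearest-2} and $\norm{\mathcal E}_{L^2_{t,x}}$ by \eqref{nonlinearest-2} again (with the median-frequency derivative placed in the $X^{0,b}$ slot), which closes immediately.
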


\begin{proof}
It suffices to prove \eqref{approx1}
since the estimate \eqref{approx2} follows from \eqref{approx1} and \eqref{solnbound-A}.  We prove \eqref{approx1} in two steps.

\subsection*{Step 1}
Let $v(t,x)=e^{\sigma |D| } u (t,x)$.  
Applying $e^{\sigma |D| }$ to \eqref{nls} we obtain
\begin{equation}\label{nlsm}
iv_t + \Delta v  = |v|^2 v + f(v),
\end{equation}
where
$$
f(v)=-\left\{  |  v|^2v  -e^{\sigma |D| } \left(    |e^{-\sigma |D| } v|^2 e^{-\sigma |D| } v\right)  \right\}.
$$
Using \eqref{nlsm} we have
$$
\re(\overline v v_t )+ \im( \overline v \Delta v )  = \im(\overline v  f(v))
$$
or equivalently
$$
(|v|^2 )_t+ 2\im( \nabla \cdot (\overline v \nabla v ))  = 2\im(\overline v  f(v)),
$$
where we used the fact $\overline v \Delta v=\nabla \cdot (\overline v \nabla v )- | \nabla v |^2$.
We may assume \footnote{
In general, this property holds by approximation using the monotone convergence theorem and the Riemann-Lebesgue Lemma  whenever $u\in X_\delta^{ \sigma, 1, b}$
(see the argument in \cite[pp. 9 ]{sd16}). }  $v, \nabla v$ and $\Delta v$ decays to zero as $|x|\rightarrow \infty$.
Integrating in space we obtain
$$
\frac{d}{dt}\int_{\R^d}  |v|^2 dx= 2\im \int_{\R^d}  \overline v f(v) dx.
$$
Now integrating in time over the interval $[0, \delta]$, we obtain
\begin{align*}
\int_{\R^d}  |v(\delta)|^2 dx= \int_{\R^d}   |v(0)|^2 dx+2\im \int_{\R^{1+d}}  \chi_{[0, \delta]}(t) \overline vf(v)  dx dt.
\end{align*}
Hence
$$
\| u(\delta)\|_{G^{\sigma,0}}^2=\| u(0)\|_{G^{\sigma, 0}}^2 +2\im \int_{\R^{1+d}}  \chi_{[0, \delta]}(t) \overline vf(v)  dx dt.
$$
We now use H\"{o}lder, Lemma \ref{lm-restrict} and Lemma \ref{lm-f-est} below to estimate the integral on the right hand side as
\begin{align*}
\left |\int_{\R^{1+d}} \chi_{[0, \delta]}(t) \overline vf(v)  dx dt \right|
&\le\|  v\|_{L_{t, x}^2 [0,\delta] }  \|  \overline {f(v)}\|_{ L_{t, x}^2 [0,\delta] }
\\
&\le \| v\|_{X^{0, b}_\delta}  \cdot  C \sigma  \| v\|_{X^{ 1, b}_\delta}^3
\\
& \le C\sigma  \| u\|_{X^{ \sigma, 1, b}_\delta}^4.
\end{align*}

Thus
\begin{equation}\label{approxmassCosv}
\| u(\delta)\|_{G^{\sigma, 0}}^2\le \| u(0)\|_{G^{\sigma,0}}^2+C\sigma \| u\|_{X^{ \sigma, 1, b}_\delta}^4.
\end{equation}

\subsection*{Step 2}
Differentiating \eqref{nlsm} we have
\begin{equation}\label{nlsm1}
i\nabla v_{t} + \Delta \nabla v   = \nabla  ( |v|^2 v )+  \nabla f(v)
\end{equation}
from which we obtain
$$
\re(\overline {\nabla v } \cdot \nabla v_t )+ \im\left( \overline {\nabla v } \cdot \Delta\nabla  v \right)  
= \im\left( \overline {\nabla v }\cdot  \nabla  ( |v|^2 v )+ \overline {\nabla v } \cdot  \nabla f(v) \right).
$$
We have
$$\overline {\nabla v } \cdot \Delta\nabla  v = \nabla\cdot ( \overline {\nabla v} \Delta v)  -| \Delta v|^2  $$
and using \eqref{nlsm} we write
\begin{align*}
 \overline {\nabla v} \cdot \nabla ( |v|^2 v ) &= \nabla \cdot( \overline {\nabla v}  |v|^2 v )- \overline {\Delta v}  |v|^2 v
\\
&=\nabla \cdot( \overline {\nabla v}  |v|^2 v )-i |v|^2 v \overline v_t - |v|^6 -  |v|^2 v \overline{ f(v)}.
\end{align*}
 It then follows
\begin{align*}
&\frac{d}{dt} \left( |\nabla v|^2 +\frac12  |v|^4\right)+  2\im\left(  \nabla\cdot ( \overline {\nabla v} \Delta v)  \right) 
\\
 & \qquad =
 2\im\left(\nabla \cdot( \overline {\nabla v}  |v|^2 v )  - |v|^2 v \overline{ f(v)} +    \overline {\nabla v } \cdot  \nabla f(v) \right).
\end{align*} 
Integrating this equation in space we get
$$
 \frac{d}{dt} \int_{\R^d} |\nabla v |^2 + \frac12 |v|^4 dx= -2\im \int_{\R^d}  (  |v|^2 v \overline{ f(v)}- \overline {\nabla v} \cdot \nabla  f(v) )  dx.
$$
Integrating in time over the interval $[0, \delta]$ we obtain
\begin{align*}
\int_{\R^d} |\nabla v (\delta)|^2   + \frac12 |v(\delta)|^4  dx&= \int_{\R^d}  |\nabla v(0)|^2  + \frac12 |v(0)|^4 dx
\\
\qquad &-2\im  \int_{\R^{1+d}} \chi_{[0, \delta]}(t)  (  |v|^2 v \overline{ f(v)}+ \overline {\nabla v} \cdot \nabla  f(v))  dx dt.
\end{align*}

Now by H\"{o}lder, \eqref{nonlinearest-2}, Lemma \ref{lm-restrict} and Lemma \ref{lm-f-est} below we estimate 
\begin{align*}
\left |\int_{\R^{1+d}} \chi_{[0, \delta]}(t) |v|^2 v \overline {f(v)} dx dt \right|
&\le\|  |v|^2 v\|_{ L_{t, x}^2 [0,\delta] }  \| \overline {f(v)}\|_{ L_{t, x}^2 [0,\delta] }
\\
&\le \| v\|^3_{X^{1, b}_\delta}  \cdot  C \sigma \| v\|_{X^{ 1, b}_\delta}^3
\\
& \le C\sigma  \| u\|_{X^{ \sigma, 1, b}_\delta}^6
\end{align*}
and 
\begin{align*}
\left |\int_{\R^{1+d}} \chi_{[0, \delta]}(t) \overline {\nabla v} \cdot \nabla  f(v)  dx dt \right|
&\le\|   \nabla v\|_{X^{0, b}_\delta }  \|    \overline {\nabla  f(v)}\|_{ X^{0, -b}_\delta }
\\
&\le \| v\|_{X^{1, b}_\delta}  \cdot  C \sigma \| v\|_{X^{ 1, b}_\delta}^3
\\
& \le C\sigma  \| u\|_{X^{ \sigma, 1, b}_\delta}^4.
\end{align*}

Thus
\begin{equation}\label{approxengCosv}
\begin{split}
\| \nabla u(\delta) \|^2_{G^{\sigma, 0}}+ \frac12\| e^{\sigma |D|  } u(\delta)\|_{L^4}^4 &\le 
\| \nabla u(0) \|^2_{G^{\sigma, 0}}+
 \frac12\| e^{\sigma |D|  } u(0)\|_{L^4}^4 
 \\
 &\qquad +C \sigma \| u\|_{X^{ \sigma, 1, b}_\delta}^4 \left(1+\| u\|_{X^{ \sigma, 1, b}_\delta}^2\right).
\end{split}
\end{equation}

We conclude from \eqref{approxmassCosv} and \eqref{approxengCosv} that
\begin{equation}\label{approxCosv-me}
A_\sigma (\delta)\le A_\sigma(0) + C\sigma \| u\|_{X^{ \sigma, 1, b}_\delta}^4 \left ( 1 +\| u\|_{X^{ \sigma, 1, b}_\delta}^2\right).
\end{equation}

\end{proof}

\begin{lemma}\label{lm-f-est}
Let
$$
f(v)=-\left\{  |  v|^2v  -e^{\sigma |D| } \left(    |e^{-\sigma |D| } v|^2 e^{-\sigma |D| } v\right)  \right\}.
$$
For all
$b>\frac12$ we have
\begin{align}
 \label{fest1}
  \| \overline { f(v)}\|_{L_{t, x}^2 }&\le C\sigma  \| v\|_{X^{1, b}}^3 ,
  \\
\label{fest1}
 \| \overline {\nabla f(v) }\|_{X^{0, -b}}&\le C \sigma  \| v\|_{X^{1, b}}^3 ,
\end{align}
for some $C>0$ which is independent of $\sigma$.
\end{lemma}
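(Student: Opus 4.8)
The plan is to pass to the space-time Fourier side and read off the multiplier that $f(v)$ attaches to the cubic convolution. Writing $u = e^{-\sigma|D|}v$, so that $v = e^{\sigma|D|}u$ and $|v|^2v = v\bar v v$, a direct computation of the Fourier transform of each of the two cubic terms in $f(v)$ shows that on the convolution region $\xi = \xi_1 - \xi_2 + \xi_3$, $\tau = \tau_1 - \tau_2 + \tau_3$ (abbreviated by $*$),
$$
\widetilde{f(v)}(\tau,\xi) = -\int_* \bigl(1 - e^{-\sigma\beta}\bigr)\,\widetilde v(\tau_1,\xi_1)\,\overline{\widetilde v(\tau_2,\xi_2)}\,\widetilde v(\tau_3,\xi_3),
\qquad \beta := |\xi_1| + |\xi_2| + |\xi_3| - |\xi|.
$$
Since $\xi = \xi_1 - \xi_2 + \xi_3$, the triangle inequality gives $|\xi| \le |\xi_1| + |\xi_2| + |\xi_3|$, hence $\beta \ge 0$, and the multiplier $M := 1 - e^{-\sigma\beta}$ obeys $0 \le M \le \sigma\beta$ by the elementary bound $1 - e^{-x}\le x$ for $x \ge 0$. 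This is exactly where the decisive single power of $\sigma$ is produced.

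Next I would record the two frequency inequalities driving the argument. Ordering the magnitudes as $|\xi_{(1)}|\ge|\xi_{(2)}|\ge|\xi_{(3)}|$, the reverse triangle inequality $|\xi| \ge |\xi_{(1)}| - |\xi_{(2)}| - |\xi_{(3)}|$ yields
$$
\beta \le 2\bigl(|\xi_{(2)}| + |\xi_{(3)}|\bigr) \le 4\langle\xi_{(2)}\rangle,
\qquad |\xi| \le |\xi_1| + |\xi_2| + |\xi_3| \le 3\langle\xi_{(1)}\rangle .
$$
The first is the crux: $\beta$ is controlled by the \emph{second} largest frequency, not the largest. Consequently $M \le \sigma\beta \le C\sigma\langle\xi_{(1)}\rangle\langle\xi_{(2)}\rangle$, and for the gradient term $|\xi|\,M \le \sigma|\xi|\beta \le C\sigma\langle\xi_{(1)}\rangle\langle\xi_{(2)}\rangle$. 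In both cases the multiplier is dominated by $\sigma$ times two weights $\langle\cdot\rangle$ carried by the two highest-frequency inputs, i.e. two derivatives distributed onto two \emph{distinct} factors.

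With these pointwise bounds I would conclude by reduction to Lemma \ref{lm-nonlinearest}. Since the $X^{s,b}$ and $L^2_{t,x}$ norms see only $|\widetilde v|$, I replace each factor by the function whose Fourier transform is $|\widetilde v|$ (reflecting one factor to absorb the conjugation in the convolution), so that Plancherel converts each multiplier estimate into a trilinear $L^2$ bound covered by \eqref{nonlinearest-2}; by the symmetry of the product the two derivative-carrying factors may be placed in the $X^{1,b}$ slots and the remaining one in the $X^{0,b}$ slot. For the first estimate, $\|\overline{f(v)}\|_{L^2} = \|f(v)\|_{L^2}$, and splitting into the finitely many cases according to which two inputs are largest gives $\|f(v)\|_{L^2}\lesssim \sigma\|v\|_{X^{1,b}}^2\|v\|_{X^{0,b}}\lesssim \sigma\|v\|_{X^{1,b}}^3$. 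For the second, I first use the trivial embedding $\|\,\cdot\,\|_{X^{0,-b}}\le\|\,\cdot\,\|_{L^2}$ (valid since $b>0$) together with $\|\bar g\|_{L^2}=\|g\|_{L^2}$, reducing $\|\overline{\nabla f(v)}\|_{X^{0,-b}}$ to $\|\nabla f(v)\|_{L^2}$, and then run the identical case analysis on the multiplier $|\xi|\,M \lesssim \sigma\langle\xi_{(1)}\rangle\langle\xi_{(2)}\rangle$ through \eqref{nonlinearest-2}.

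The main obstacle is the gradient estimate. The naive bounds $\beta \lesssim \langle\xi_{(1)}\rangle$ and $|\xi|\lesssim\langle\xi_{(1)}\rangle$ would force $|\xi|\,M \lesssim \sigma\langle\xi_{(1)}\rangle^2$, that is, two derivatives piled on a single factor, which \eqref{nonlinearest-2} cannot absorb. The sharper inequality $\beta \le 4\langle\xi_{(2)}\rangle$ is exactly what splits the two derivatives across the two largest-frequency factors and brings the estimate into the range of the available trilinear bound; carefully verifying this frequency inequality — and tracking that the derivative gained from $\nabla$ lands on the largest-frequency input while the $\beta$-derivative lands on the second-largest — is where the real work lies.
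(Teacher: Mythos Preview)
Your Fourier-side setup and the key frequency inequality $\beta=\sum_j|\xi_j|-|\xi|\le 4\langle\xi_{(2)}\rangle$ (equivalently $\beta\lesssim\xi_{\mathrm{med}}$) are exactly what the paper uses, and the first estimate goes through as you say once you apply \eqref{nonlinearest-2} with the single weight $\langle\xi_{(2)}\rangle$ placed on the $X^{0,b}$ slot.

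The gradient estimate, however, does \emph{not} close along your route. You discard the negative index via $\|\cdot\|_{X^{0,-b}}\le\|\cdot\|_{L^2}$ and then try to bound $\|\nabla f(v)\|_{L^2}$ using \eqref{nonlinearest-2}. After inserting $|\xi|\,M\lesssim\sigma\langle\xi_{(1)}\rangle\langle\xi_{(2)}\rangle$ you are left with
\[
\sigma\,\bigl\|\langle D\rangle w_{(1)}\cdot\langle D\rangle w_{(2)}\cdot w_{(3)}\bigr\|_{L^2_{t,x}}.
\]
Estimate \eqref{nonlinearest-2} places \emph{two} of the three inputs in $X^{1,b}$ and one in $X^{0,b}$. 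No matter how you assign the slots, at least one of the already-weighted factors $\langle D\rangle w_{(j)}$ lands in an $X^{1,b}$ slot, producing $\|w_{(j)}\|_{X^{2,b}}$, which you cannot control by $\|v\|_{X^{1,b}}$. So the step ``run the identical case analysis \ldots\ through \eqref{nonlinearest-2}'' fails; your sentence ``the two derivative-carrying factors may be placed in the $X^{1,b}$ slots'' is exactly where the extra derivative sneaks in.

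The paper avoids this by \emph{not} passing to $L^2$: it keeps the $X^{0,-b}$ norm and invokes \eqref{nonlinearest-1},
\[
\Bigl\|\,\prod_j U_j\,\Bigr\|_{X^{0,-b}}\lesssim \|u_1\|_{X^{1,b}}\|u_2\|_{X^{0,b}}\|u_3\|_{X^{0,b}},
\]
which has only one $X^{1,b}$ factor on the right. Then the two weights go to the two $X^{0,b}$ slots, $\|\langle D\rangle w_{(j)}\|_{X^{0,b}}=\|w_{(j)}\|_{X^{1,b}}$, and the unweighted factor sits in $X^{1,b}$, yielding $\sigma\|v\|_{X^{1,b}}^3$. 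In short, the smoothing built into the $X^{0,-b}$ norm (and captured by \eqref{nonlinearest-1}) is essential for the gradient bound; throwing it away by embedding into $L^2$ loses precisely the one derivative you need.
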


\begin{proof}
Taking the space-time Fourier Transform of $\overline f$ we have
\begin{align*}
|\widetilde{\overline {f(v) }}(\tau, \xi) |&=\left |\int_{\ast}
 \left\{    1- e^{-\sigma( \sum_{j=1}^{3} |\xi_j| - |\xi| )}   \right\} \widetilde{v }(\tau_1, \xi_1)\overline{\widetilde{ v }(\tau_2, \xi_2) \widetilde{ v }(\tau_3, \xi_3)}
 d\tau_1d\xi_1 d\tau_2d\xi_2\right|
\end{align*}
where we used $\ast$ to denote the conditions $\xi= \xi_1-\xi_2-\xi_3 $ and $\tau= \tau_1-\tau_2-\tau_3 $. 
Now denote the minimum, medium and maximum 
 of $\{ |\xi_1|,  |\xi_2|, |\xi_3|\}$ by
 $\xi_{\text{min}}$, $\xi_{\text{med}}$ and $\xi_{\text{max}}$.
Then we have
\begin{align*}
 1- e^{-\sigma( \sum_{j=1}^{3} |\xi_j| - |\xi| )} &\le \sigma \left( \sum_{j=1}^{3} |\xi_j| - |\xi| \right)
\\
&= \sigma \frac{  (\sum_{j=1}^{3} |\xi_j| )^2  - |\xi | ^2}{  \sum_{j=1}^{3} |\xi_j|   +| \xi |}
\\
  &\le 12 \sigma \frac{ \xi_{\text{med}} \cdot \xi_{\text{max}} }{  \xi_{\text{max}}}=12 \sigma \xi_{\text{med}} .
\end{align*}
Consequently, 
\begin{align*}
|\widetilde{\overline {f(v)} }(\tau, \xi) |
 &\le 12 \sigma\int_{\ast}   \xi_{\text{med}} 
 |\widetilde{v  }(\tau_1, \xi_1)||\widetilde{v }(\tau_2, \xi_2)||\widetilde{ v }(\tau_3, \xi_3)|
 d\tau_1d\xi_1 d\tau_2d\xi_2.
\end{align*}
Let
$$
\widetilde{w_j }(\tau_j, \xi_j)=|\widetilde{v  }(\tau_j, \xi_j)|.
$$
By symmetry, we may assume $ |\xi_1|\le  |\xi_2| \le |\xi_3|$, and hence $\xi_{\text{med}}=   |\xi_2|$. So we use \eqref{nonlinearest-2} to obtain
\begin{align*}
 \| \overline { f(v)}\|_{L^2_{t,x}}&= \left\|   \widetilde{\overline { f(v)} }(\tau, \xi) \right\|_{L^2_{\tau, \xi}}
\\
&\ls  \sigma  \left\|  \int_{\ast}  \angles{\xi_3}  \widetilde{w_1  }(\tau_1, \xi_1) \overline{\widetilde{ w_2 }(\tau_2, \xi_2) \widetilde{w_3 }(\tau_3, \xi_3)}
 d\tau_1d\xi_1 d\tau_2d\xi_2\right \|_{L^2_{\tau, \xi}}
  \\
  &\ls \sigma \left\|    w_1 \cdot \overline{ w_2} \cdot \overline{  \angles{D} w_3 }  \right\|_{L^2_{t,x}}
  \\
  &\ls \sigma   \|     w_1 \|_{ X^{1, b} }    \|    w_2\|_{ X^{1, b} }  \|  \angles{D} w_3 \|_{  X^{0, b}} 
  \\
  &\ls \sigma  \|     v \|_{X^{1, b}}^3.
\end{align*}
Similarly, we use \eqref{nonlinearest-1} to obtain
\begin{align*}
 \| \overline {\nabla f(v)}\|_{X^{0, -b}}&= \left\| \|\xi| \angles{\tau+|\xi|^2}^{-b}  \widetilde{\overline {f(v)} }(\tau, \xi) \right\|_{L^2_{\tau, \xi}}
\\
&\ls  \sigma  \left\| \angles{\tau+|\xi|^2}^{-b} \int_{\ast}  \angles{\xi_2} \angles{\xi_3} \widetilde{w_1  }(\tau_1, \xi_1)
 \overline{\widetilde{ w_2 }(\tau_2, \xi_2) \widetilde{w_3 }(\tau_3, \xi_3)}
 d\tau_1d\xi_1 d\tau_2d\xi_2\right \|_{L^2_{\tau, \xi}}
  \\
  &\ls \sigma \left\|    w_1 \cdot \overline{\angles{D} w_2} \cdot \overline{ \angles{D} w_3 }  \right\|_{X^{0, -b}}
  \\
  &\ls \sigma   \|     w_1 \|_{ X^{1, b} }    \|  \angles{D}   w_2\|_{ X^{0, b} }  \|  \angles{D} w_3 \|_{ X^{0, b} } 
  \\
  &\ls \sigma  \|     v \|_{X^{1, b}}^3.
\end{align*}

\end{proof}

\section{Proof of Theorem \ref{thm-gwp} }

We closely follow the argument in \cite{sd16}. 
First we consider the case $s=1$.
The general case, $s\in \R$, will essentially reduce to $s=1$ as shown in the next subsection. 

\subsection{Case $s=1$}

Let $u_0=u(0) \in G^{\sigma_0, 1 }(\R^d)$ for some $\sigma_0 > 0$, where $d=1,2,3$. Then
by Gagliardo-Nirenberg inequality we have
\begin{align*}
A_{\sigma_0}(0)&=\|  u_0\|^2_{G^{\sigma_0, 1}}+ \frac12 \| e^{\sigma_0 |D|  } u_0\|_{L^4}^4
\\
&\le \|  u_0\|^2_{G^{\sigma_0, 1}}+c \| \nabla(e^{\sigma_0 |D|  }  u_0)\|^d_{L^2}  \|e^{\sigma_0 |D|  }u_0\|^{4-d}_{L^2}
 \\ 
 &\le \|  u_0\|^2_{G^{\sigma_0, 1}}+ c( \| \nabla u_0\|^{2d}_{G^{\sigma_0, 0}}+  \|u_0\|^{8-2d}_{G^{\sigma_0,0}})
 \\
 &<\infty.
\end{align*}

To construct a solution on $[0, T]$ for  arbitrarily large $T$, we will apply
 the approximate conservation law in Theorem \ref{thm-approx} so as to repeat the local result 
on successive short time intervals to reach $T$, by adjusting the strip width parameter $\sigma$ according to the size of $T$.
By employing this strategy we will show that the solution $u$ to \eqref{nls} satisfies
\begin{equation}
\label{uT}
  u(t) \in G^{\sigma(t), 1 } \quad \text{for all }  \  t\in [0,  T] ,
 \end{equation}
with
\begin{equation}
\label{siglb}
\sigma(t) \ge \frac cT ,
\end{equation}
where $c > 0$ is a constant depending on $\|u_0\|_{G^{\sigma_0, 1}}$ and $\sigma_0$.

By Theorem \ref{thm-lwp} there is 
a solution $u$ to \eqref{nls} satisfying
\[
  u(t) \in G^{\sigma_0,1} \quad \text{for all }  \  t\in [0,  \delta] 
\]
where
\begin{equation}\label{delta}
  \delta = c_0 (1+A_{\sigma_0}(0))^{-4-} .
\end{equation}

Now fix $T$ arbitrarily large. We shall apply the above local result and Theorem \ref{thm-approx} repeatedly, with a uniform time step $\delta$ as in \eqref{delta},  and prove 
\begin{equation}\label{keybound}
\sup_{t\in [0, T]}  A_\sigma  (t) \le 2A_{\sigma_0} (0)
\end{equation}
for $\sigma$ satisfying \eqref{siglb}. 
Hence we have $A_\sigma(t) < \infty$ for $t \in [0,T]$, which in turn implies $u(t) \in G^{\sigma(t), 1 }$, and this completes the proof of \eqref{uT}--\eqref{siglb}.

It remains to prove \eqref{keybound} which shall do as follows. Choose $n \in \mathbb N$ so that $T \in [n\delta,(n+1)\delta)$. Using induction we can show for 
any $k \in \{1,\dots,n+1\}$ that
\begin{align}
  \label{induction1}
  \sup_{t \in [0, k\delta]} A_\sigma(t) &\le A_\sigma(0) + 8 C\sigma k A^2_{\sigma_0}(0)\{  1+ A_{\sigma_0}(0)\},
  \\
  \label{induction2}
  \sup_{t \in [0,k\delta]} A_\sigma(t) &\le 2A_{\sigma_0}(0),
\end{align}
provided $\sigma$ satisfies 
\begin{equation}\label{sigma}
  \frac{16 T}{\delta} C\sigma A_{\sigma_0}(0)\{  1+ A_{\sigma_0}(0)\} \le 1.
\end{equation}

Indeed, for $k=1$, we have from Theorem \ref{thm-approx} that
\begin{align*}
  \sup_{t \in [0, \delta]} A_\sigma(t) &\le A_\sigma(0) +  C\sigma A^2_{\sigma}(0)\{  1+ A_{\sigma}(0)\}
  \\
  &\le A_\sigma(0) +  
  C\sigma A^2_{\sigma_0}(0)\{  1+ A_{\sigma_0}(0)\}
\end{align*}
where we used $A_\sigma(0) \le A_{\sigma_0}(0) $. This in turn implies \eqref{induction2} provided 
$$C\sigma A_{\sigma_0}(0)\{  1+ A_{\sigma_0}(0)\}\le 1$$
 which holds by \eqref{sigma} since 
$T>\delta$.

Now 
assume \eqref{induction1} and \eqref{induction2} hold for some $k \in \{1,\dots,n\}$. Then applying Theorem \ref{thm-approx}, \eqref{induction2} and \eqref{induction1}, respectively, we obtain
\begin{align*}
  \sup_{t \in [k\delta, (k+1)\delta]} A_\sigma(t) &\le A_\sigma(k\delta) +  C\sigma  A^2_{\sigma}(k\delta) \{ 1+ A_{\sigma}(k\delta)\}
  \\
   &\le A_\sigma(k\delta) +  8C\sigma A^2_{\sigma_0}(0)\{  1+ A_{\sigma_0}(0)\}
   \\
      &\le   A_\sigma(0) +  8C\sigma (k+1) A^2_{\sigma_0}(0)\{  1+ A_{\sigma_0}(0)\}.
\end{align*}
Combining this with the induction hypothesis
 \eqref{induction1} (for $k$) we obtain
 \begin{align*}
  \sup_{t \in [0, (k+1)\delta]} A_\sigma(t) 
      &\le   A_\sigma(0) +8C\sigma (k+1) A^2_{\sigma_0}(0)\{  1+ A_{\sigma_0}(0)\}
\end{align*}
 which proves \eqref{induction1} for $k+1$. This also implies \eqref{induction2} for $k+1$ provided
 \begin{align*}
  8 ( k+1) C\sigma A_{\sigma_0}(0) \{  1+ A_{\sigma_0}(0)\}\le 1.
\end{align*}
 But the latter follows from \eqref{sigma} since 
 $$
 k+1\le n+1\le \frac T\delta+ 1 \le \frac {2T}\delta.
 $$

Finally,  the condition \eqref{sigma} is satisfied for $\sigma$ such that
    $$
    \frac{16 T}{\delta} C\sigma A_{\sigma_0}(0)\{  1+ A_{\sigma_0}(0)\} =1.
    $$
Thus, 
$$
\sigma=\frac{c_1}T , \ \ \text{where} \ c_1=  \frac{c_0}{16C  A_{\sigma_0}(0) (1+A_{\sigma_0}(0))^{5+}}
$$
which gives \eqref{siglb} if we choose $c\le c_1$.

\subsection{The general case: $s\in \R$}

For any $s\in \R$ we use the embedding \eqref{Gembedding} to get
\[
  u_0 \in G^{\sigma_0,s} \subset G^{\sigma_0/2, 1}.
\]
From the local theory there is a $\delta=\delta \left(   A_{\sigma_0/2}(0) \right)$ such that
\[
  u(t)\in  G^{\sigma_0/2, 1} \quad \text{for } \ 0\le t \le \delta.
\]
Fix an arbitrarily large $ T$. From the case $s=1$ in the previous subsection we have
\[
  u(t) \in G^{ 2 a_0 T^{-1} , 1}  \quad \text{for $ t\in [0,   T]$},
\]
where $a_0 > 0$ depends on $A_{\sigma_0/2}(0)$ and $\sigma_0$. Applying again the embedding \eqref{Gembedding} we conclude that
\[
  u(t) \in G^{  a_0 T ^{-1}, s }  \quad \text{for $ t\in [0,   T]$},
\]
 completing the proof of Theorem \ref{thm-gwp}.

\section{Proof of Lemma \ref{lm-nonlinearest}}

\subsection{Estimate \eqref{nonlinearest-1} }
In the case of $d=1$ we have the stronger estimate (see \cite[Colloraly 3.1]{g01})
$$
\|   U_1  U_2 U_3 \|_{L^2_{t,x}(\R^{1+1})  }\ls   \prod_{j=1}^3\|   u_j  \|_{X^{0, b} (\R^{1+1})},
$$ 
which also implies \eqref{nonlinearest-2}.

Now assume $d=2, 3$. By duality \eqref{nonlinearest-1} reduces to
$$
\left| \int_{\R^{1+d} }  U_1 U_2 U_3 \overline u_4  dt dx\right| \ls \|   u_1  \|_{X^{1, b} (\R^{1+d})} \prod_{j=2}^4 \|   u_j  \|_{X^{0, b} (\R^{1+d})}.
$$
 By H\"{o}lder and \eqref{strz-transfer} we have
\begin{align*}
\left| \int_{\R^{1+2} }    U_1 U_2 U_3  \overline u_4  dt dx\right| &\le \prod_{j=1}^4 \|   u_j  \|_{L^4_{t,x} (\R^{1+2})}
\\
&\le  \prod_{j=1}^4 \|   u_j  \|_{X^{0, b} (\R^{1+2})}.
\end{align*}
Similarly, by H\"{o}lder, Sobolev and \eqref{strz-transfer} we have
\begin{align*}
\left| \int_{\R^{1+3} }      U_1   U_2 U_3  \overline u_4  dt dx\right| &\le 
\|    u_1  \|_{L^\infty_tL^6_x(\R^{1+3})  } \prod_{j=2}^3\|   u_j  \|_{L^2_tL^6_x (\R^{1+3}) } \|   u_4  \|_{L^\infty_tL^2_x (\R^{1+3}) } 
\\
&\le \|   \angles{D}  u_1  \|_{L^\infty_tL^2_x (\R^{1+3}) } \prod_{j=2}^3\|   u_j  \|_{L^2_tL^6_x (\R^{1+3}) } \|   u_4  \|_{L^\infty_tL^2_x (\R^{1+3}) } 
\\
&\ls  \|   u_1  \|_{X^{1, b} (\R^{1+3})}  \prod_{j=2}^4 \|   u_j  \|_{X^{0, b} (\R^{1+3})}  .
\end{align*}

\subsection{Estimate \eqref{nonlinearest-2} }
Assume $d=2,3$. 
By duality \eqref{nonlinearest-2} reduces to
$$
\left| \int_{\R^{1+d} }  U_1 U_2 U_3 \overline u_4  dt dx\right| \ls  \prod_{j=1}^2\|   u_j  \|_{X^{1, b} (\R^{1+d})}  \|   u_3  \|_{X^{0, b} (\R^{1+d})}
 \|   u_4  \|_{L^2_{t,x} (\R^{1+d})}.
$$

By H\"{o}lder, Sobolev and \eqref{strz-transfer} we obtain
\begin{align*}
\left| \int_{\R^{1+2} }     U_1  U_2 U_3  \overline u_4  dt dx\right| &\le 
\prod_{j=1}^2\|    u_j  \|_{L^8_{t,x}(\R^{1+2})  } \|   u_3  \|_{L^4_{t,x}(\R^{1+2}) } \|   u_4  \|_{L^2_{t,x} (\R^{1+2}) } 
\\
&\le \prod_{j=1}^2\|   \angles{D}^{\frac12} u_j  \|_{L^8_t L^{\frac83}_x (\R^{1+2})  } \|   u_3  \|_{L^4_{t,x}(\R^{1+2}) } \|   u_4  \|_{L^2_{t,x} (\R^{1+2}) } 
\\
&\ls \prod_{j=1}^2 \|   u_j  \|_{X^{\frac12, b} (\R^{1+2})}   \|   u_3  \|_{X^{0, b} (\R^{1+2})}
 \|   u_4  \|_{L^2_{t,x} (\R^{1+2})}.
\end{align*}

Similarly,
\begin{align*}
\left| \int_{\R^{1+3} }     U_1   U_2 U_3  \overline u_4  dt dx\right| &\le 
\prod_{j=1}^2\|    u_j  \|_{L^\infty_t L^6_x (\R^{1+3})  } \|   u_3  \|_{L^2_t L^6_x(\R^{1+3}) } \|   u_4  \|_{L^2_{t,x} (\R^{1+3}) } 
\\
&\le \prod_{j=1}^2\|   \angles{D} u_j  \|_{L^\infty_t L^2_x (\R^{1+3})  } \|   u_3  \|_{L^2_t L^6_x(\R^{1+3}) } \|   u_4  \|_{L^2_{t,x} (\R^{1+3}) } 
\\
&\ls \prod_{j=1}^2 \|   u_j  \|_{X^{1, b} (\R^{1+3})}  \|   u_3  \|_{X^{0, b} (\R^{1+3})}
 \|   u_4  \|_{L^2_{t,x} (\R^{1+3})}.
\end{align*}

\subsection{Estimate \eqref{nonlinearest-3} }
First assume $\sigma=0$.
By Leibniz rule and symmetry it suffices to show
$$
\|   U_1 U_2 \angles{D}U_3 \|_{L^2_{t,x}  }\ls  \prod_{j=1}^3 \|   u_j  \|_{X^{1, b} } .
$$
But this follows from \eqref{nonlinearest-2}. Next assume  $\sigma>0$.
W.l.o.g assume $U_1=u_1$,  $ U_2=\overline u_2 $ and $ U_3= \overline u_3 $. Let
$$
v_j=e^{\sigma |D|} u_j.
$$
Then \eqref{nonlinearest-3} reduces to 
\begin{equation}\label{nonlinearest-31}
\|   e^{\sigma |D|} \left( e^{-\sigma |D|}v_1  \cdot  \overline{ e^{-\sigma |D|}v_2 \cdot e^{-\sigma |D|}v_3  } \right) \|_{ X^{1,0} }\ls  \prod_{j=1}^3 \|   v_j  \|_{X^{1, b} } .
\end{equation}

 Let 
$$
\widetilde{w_j }(\tau_j, \xi_j)=|\widetilde{v  }(\tau_j, \xi_j)|.
$$
Then taking the space-time Fourier Transform and using the fact that $e^{\sigma ( |\xi| -\sum_{j=1}^4 |\xi_j| )}\le 1$, which follows from the triangle inequality, we obtain 
\begin{align*}
\text{L.H.S} \ \eqref{nonlinearest-31}&\le \left\|  \int_{\ast}  \angles{\xi}  |\widetilde{v_1  }(\tau_1, \xi_1)|| \widetilde{ v_2 }(\tau_2, \xi_2) | |\widetilde{v_3 }(\tau_3, \xi_3)|
 d\tau_1d\xi_1 d\tau_2d\xi_2\right \|_{L^2_{\tau, \xi}}
 \\
 &\le \left\|  \int_{\ast}  \angles{\xi} \widetilde{w_1  }(\tau_1, \xi_1)\overline{\widetilde{ w_2 }(\tau_2, \xi_2) \widetilde{w_3 }(\tau_3, \xi_3)}
 d\tau_1d\xi_1 d\tau_2d\xi_2\right \|_{L^2_{\tau, \xi}}
 \\
  &\ls  \left\|    w_1 \cdot \overline{ w_2} \cdot \overline{  w_3 }  \right\|_{X^{1,0}}
  \\
  &\ls    \prod_{j=1}^3 \|   w_j  \|_{X^{1, b} }
 = \prod_{j=1}^3 \|   v_j  \|_{X^{1, b} },
\end{align*}
where in the last line we used the estimate for $\sigma=0$.

\end{document}